\newtheorem{theorem}{Theorem}[section]
\newtheorem{prop}{Proposition}
\newtheorem{theo}[prop]{Theorem}
\newtheorem{coro}[prop]{Corollary}
\newtheorem{lemm}[prop]{Lemma}
\theoremstyle{definition}
\theoremstyle{remark}
\newtheorem{rema}[prop]{Remark}
\numberwithin{equation}{section}
\newcommand{\ra}{\rightarrow}
\newcommand{\bC}{{\mathbb C}}
\newcommand{\bD}{{\mathbb D}}
\newcommand{\bG}{{\mathbb G}}
\newcommand{\bP}{{\mathbb P}}
\newcommand{\bZ}{{\mathbb Z}}
\newcommand{\cC}{{\mathcal C}}
\newcommand{\cF}{{\mathcal F}}
\newcommand{\cO}{{\mathcal O}}
\newcommand{\cX}{{\mathcal X}}
\newcommand{\cY}{{\mathcal Y}}
\newcommand{\cZ}{{\mathcal Z}}
\newcommand{\md}{{\mathfrak d}}
\newcommand{\sg}{{\mathsf g}}
\newcommand{\Bl}{\mathrm{Bl}}
\newcommand{\End}{\mathrm{End}}
\newcommand{\GL}{\mathrm{GL}}
\newcommand{\Gr}{\mathrm{Gr}}
\newcommand{\Hom}{\mathrm{Hom}}
\newcommand{\IJJ}{\mathrm{IJ}}
\newcommand{\JJ}{\mathrm{J}}
\newcommand{\oSect}{\overline{\mathrm{Sect}}}
\newcommand{\PGL}{\mathrm{PGL}}
\newcommand{\Pic}{\mathrm{Pic}}
\newcommand{\rank}{\mathrm{rank}}
\newcommand{\Res}{\mathrm{Res}}
\newcommand{\Sect}{\mathrm{Sect}}
\newcommand{\Sym}{\mathrm{Sym}}
\begin{document}

\title[Spaces of sections]{Spaces of sections of quadric surface fibrations over curves}


\author{Brendan Hassett}
\address{Department of Mathematics\\
Rice University, MS 136 \\
Houston, Texas  77251-1892 \\
USA}
\email{hassett@rice.edu}
\thanks{The first author was supported by National Science Foundation Grants 0901645 and 0968349.}

\author{Yuri Tschinkel}
\address{Courant Institute\\
                New York University \\
                New York, NY 10012 \\
                USA }
\email{tschinkel@cims.nyu.edu}
\thanks{The second author was supported by National Science Foundation
Grants 0739380, 0901777, and 0968318.}

\subjclass[2000]{Primary 14D06, 14G05;  Secondary 14H60}

\date{November 3, 2011}

\begin{abstract}
We consider quadric surface fibrations over curves, defined over algebraically
closed and finite fields.  Our goal is to understand, in geometric terms,
spaces of sections for such fibrations.  We analyze varieties of
maximal isotropic subspaces in the fibers as $\bP^1$-bundles over the
discriminant double cover.  When the $\bP^1$-bundle is suitably stable,
we deduce effective estimates for the heights of sections over finite
fields satisfying
various approximation conditions.  We also discuss the behavior 
of the spaces of sections as the base of the fibration acquires 
singularities.  

\end{abstract}

\maketitle

\section{Introduction}

Let $k$ be a field of characteristic not equal to two,
$B$ a smooth projective curve of genus $\sg(B)$ over $k$, and $F$ its function field.   
A {\em quadric hypersurface fibration} is a flat projective morphism $\pi:\cX \ra B$
such that each geometric fiber is a quadric hypersurface with at worst an isolated singularity
and the generic fiber is smooth.  
Sections $\sigma:B\ra \cX$ of $\pi$ are in bijection with rational
points $X(F)$.  

Our study is motivated by arithmetic applications and analogies between 
function fields of curves and number fields.  When $k$ is a finite field, 
the following problems have been studied by various research groups:
\begin{enumerate}
\item existence of rational points, see, e.g.,  \cite{ct-kahn}, \cite{CTSD};
\item bounding the smallest height of a rational point;
\item weak approximation  \cite{Harder} and its effective versions;
\item asymptotic distribution of rational points with respect to heights, e.g., \cite{lai}, 
\cite{peyre}, \cite{bourqui}.
\end{enumerate}
All of these questions ultimately rely on algebro-geometric properties of 
spaces of sections.  In this paper we study in detail these spaces.
We relate computable invariants of quadric surfaces over function fields
of curves, like the discriminant, to geometric invariants of spaces of sections
such as the maximally rationally connected quotients of the section spaces. 

In general, spaces of rational curves on rationally connected threefolds have 
intricate geometry, even for cubic threefolds or complete intersections of two
quadrics in $\bP^5$
(see e.g., \cite{castravet}, \cite{HRS,HRS2}). 
Rational surface fibrations over $\bP^1$
appear to be much easier. 
In our case, the spaces of sections turn out to be projective bundles over
the Jacobian of the discriminant curve.  This allows us to answer the questions
above. 

The geometry of the degenerations of spaces of sections serves as a prototype for 
investigations of sections of more complicated rational surface fibrations
over curves.  However, even in the case of quadric surfaces, our inductive approach
has implications for enumerative geometry, e.g., the Gromov-Witten invariants
associated with sections of height passing through prescribed points and 
curves in the fibers.  We expect an inductive formula for this, expressed in terms of
the numerical invariants of the fibration.

\

We summarize the contents of this paper.  Section~\ref{sect:QFDH}
develops general notions of height and discriminant for quadric hypersurface
fibrations.  Section~\ref{sect:reduction}
presents the key construction of reduction to the discriminant.  This
is fundamental and well-known in the algebraic study of quadratic
forms, but here we recast it in geometric terms.  We also include
numerical estimates on the dimension of spaces of sections, from 
various points of view.  
In Section~\ref{sect:casebycase}, we show how our classification
techniques apply over $\bP^1$ and offer explicit equations for
the quadric surface fibrations in each case.  Quadric surface fibrations
admit numerous birational modifications;  the structure of these
is indicated in Section~\ref{sect:hecke}.  
We review some general facts about stability of bundles in Section~\ref{sect:SBC}.
Sections~\ref{sect:projbundle} and \ref{sect:Neron} demonstrate
how spaces of sections of quadric fibrations specialize as the 
discriminant curve acquires nodes.  We expect there exists a 
compactification for the space of quadric surface fibrations 
with square-free discriminant over the moduli space of admissible
discriminant covers (cf.~\cite{HM}),
sharing many properties with Pandharipande's
compactification of the moduli space of vector bundles over 
moduli space of stable curves \cite{Pandharipande}.
The theory of N\'eron models in the context of limiting mixed Hodge
structures offers a useful framework for the analysis of components
of the space of sections as the discriminant breaks.  
Section~\ref{sect:SDC} and \ref{sect:AA} are devoted to arithmetic
applications, e.g., effective weak approximation,
which entail effective estimates of vanishing of cohomology.

\

{\bf Acknowledgments:} 
We are grateful to A. Auel, M. Kerr, and J. Starr for helpful conversations,
and to N. Hoffmann for comments on a draft of this manuscript.  

\section{Quadratic forms, discriminants, and heights}
\label{sect:QFDH}

Let $\pi:\cX \ra B$ be a quadric hypersurface fibration of relative
dimension $n$, as defined
in the introduction.  
Let $\omega_{\pi}$ denote the relative dualizing sheaf,
an invertible sheaf that commutes with basechange.  
The {\em height} of $\cX$ is defined as
$$h(\cX)=-\deg(c_1(\omega^{-1}_{\pi})^{n+1}).$$
Note that
\begin{itemize}
\item
If $\cX \ra B$ is trivial, i.e., $\cX\simeq \cX_b \times B$ for some smooth quadric $\cX_b$,
then $h(\cX)=0$.  
\item
If $B' \ra B$ is a finite morphism of smooth projective curves
then 
$$h(\cX \times_B B')=\deg(B'/B)h(\cX).$$
\item
Every smooth quadric fibration $\cX\ra B$ also has $h(\cX)=0$.
\end{itemize}
To deduce the last statement, it suffices to observe that a smooth quadric fibration
may be trivialized after a finite flat base change $B' \ra B$.  

We define the {\em height} of a section $\sigma:B\ra \cX$ of $\pi$ to be
$$h_{\omega^{-1}_{\pi}}(\sigma)=\deg(\sigma^*\omega^{-1}_{\pi}).$$
If $\cX$ is smooth then this equals the degree of the normal bundle
$N_{\sigma}$.
We are interested in spaces of sections 
$$\Sect(\cX/B,h)=\{\sigma:B \ra \cX: h_{\omega^{-1}_{\pi}}(\sigma)=h \}.$$ 

If $k$ is algebraically closed then the
Brauer group of $k(B)$ for any smooth curve $B$ is trivial, thus
there exists a line bundle $H$ on 
$\cX$ restricting to the hyperplane class on each fiber of $\pi$.  
The sheaf $\pi_*H$ is locally free of rank $n+2$ and we have
an embedding $\cX \hookrightarrow \bP((\pi_*H)^{\vee})$.  
The defining equation is given by a section
$$q\in \Sym^2((\pi_*H))\otimes I,$$
where $I$ is an invertible sheaf of $B$.  Note that $H$ and 
$I$ can be rescaled;  for each invertible
sheaf $L$ on $B$, we may replace $H$ by $H\otimes L$ and 
$I$ by $I \otimes L^{\otimes 2}$ without altering $q$.  
Therefore, we will often normalize $H$ so that $\deg(I)=0$ or $1$;
when using this convention, we write $E=(\pi_*H)^{\vee}$.  
The parity 
$$\epsilon(\pi):=\deg(I)\pmod{2}$$
is an invariant of the fibration $\pi:\cX \ra B$.  

We may interpret the defining quadratic form $q$ as a homomorphism
$$q:E \ra E^{\vee} \otimes I,$$
self-dual under the application of $\Hom(-,I)$.  The {\em discriminant}
$\md$ is defined as the divisor where $q$ 
drops rank, which gives \cite{HT}
$$
\Delta=\deg(\md)=\deg(E^{\vee} \otimes I)-\deg(E)=
-2\deg(E)+(n+2)\deg(I),$$
so in particular
$$
\Delta \equiv \begin{cases} -2\deg(E)\pmod{2(n+2)} & \text{ if } \epsilon(\pi)\equiv 0\pmod{2} \\
                            -2\deg(E)+n+2 \pmod{2(n+2)} & \text{ if } \epsilon(\pi)\equiv 1\pmod{2}.
		\end{cases}
$$
The fibration $\pi:\cX \ra B$ has {\em square-free discriminant} if the divisor
$\md$ is reduced; a local computation shows
this is equivalent to the total space $\cX$ being smooth.  

\begin{prop} \label{prop:quadricformulaB}
If $\pi:\cX \ra B$ is a quadric hypersurface fibration of relative dimension $n$
with square-free discriminant then
$$h(\cX)=n^n \Delta.$$
\end{prop}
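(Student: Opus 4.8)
The plan is to realize $\cX$ as a divisor in the projective bundle $P=\bP(E)$ attached to $E=(\pi_*H)^{\vee}$, compute $c_1(\omega_\pi^{-1})$ by adjunction, and evaluate its top self-intersection by intersection theory on $P$, expressing the answer through $\deg(E)$ and $\deg(I)$, which are tied to $\Delta$ by the relation $\Delta=-2\deg(E)+(n+2)\deg(I)$ recorded above. First I would fix notation: let $p:P\ra B$ be the projection and $\xi=c_1(\cO_P(1))$ the relative hyperplane class, normalized so that $p_*\cO_P(1)=E^{\vee}=\pi_*H$. With this normalization, global sections of $\cO_P(2)\otimes p^*I$ are exactly the elements of $\Sym^2(\pi_*H)\otimes I$, so the defining form $q$ exhibits $\cX$ as a divisor of class $[\cX]=2\xi+p^*c_1(I)$.

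Next, from the relative Euler sequence on $P$ I would compute $c_1(T_{P/B})=(n+2)\xi+p^*c_1(E)$, hence $\omega_{P/B}=\cO_P(-(n+2))\otimes p^*(\det E)^{-1}$. Adjunction $\omega_\pi=(\omega_{P/B}\otimes\cO_P(\cX))|_\cX$ then gives, using $(n+2)-2=n$,
$$c_1(\omega_\pi^{-1})=\left[\,n\xi+p^*c_1(E)-p^*c_1(I)\,\right]\big|_\cX.$$

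Then I would raise this class to the power $n+1$ and push it to $B$ against $[\cX]=2\xi+p^*c_1(I)$, that is, evaluate $\deg c_1(\omega_\pi^{-1})^{n+1}=\int_P(\,n\xi+p^*(c_1(E)-c_1(I)))^{n+1}(2\xi+p^*c_1(I))$. Two facts keep this routine: any product of two classes pulled back from the curve $B$ vanishes, so only the term linear in $p^*(c_1(E)-c_1(I))$ survives the binomial expansion; and the Segre relations $p_*\xi^{n+1}=1$ and $p_*\xi^{n+2}=-c_1(E)$ evaluate the surviving integrals. Collecting coefficients yields $\deg c_1(\omega_\pi^{-1})^{n+1}=n^n\bigl(2\deg E-(n+2)\deg I\bigr)=-n^n\Delta$, and since $h(\cX)=-\deg c_1(\omega_\pi^{-1})^{n+1}$ by definition, the identity $h(\cX)=n^n\Delta$ follows.

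The main obstacle is the bookkeeping of projective-bundle conventions: whether $\cO_P(1)$ pushes forward to $E$ or $E^{\vee}$, the resulting sign in $p_*\xi^{n+2}$, and the twist appearing in $\omega_{P/B}$. An inconsistent choice among these flips the overall sign and mismatches the stated $\Delta$. I would pin them down once and for all by the normalization $p_*\cO_P(1)=\pi_*H$ (so the fibers of $P$ are lines in $E$ and each $\cX_b\subset\bP(E_b)$ is cut out by $q_b\in\Sym^2 E_b^{\vee}$) and carry that single choice consistently through the Euler sequence and the Segre computation. Smoothness of $\cX$, equivalent to the square-free discriminant hypothesis, guarantees that $\omega_\pi$ is invertible and that adjunction applies verbatim.
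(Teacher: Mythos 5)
Your proposal is correct, and it takes a genuinely different route from the paper's proof. The paper argues by degeneration and discrepancy bookkeeping: it passes to a simply branched double cover $C\ra B$ whose branch locus contains $\md$ (doubling the height), blows up the $\Delta$ singular points of the fibers of $\cX\times_B C\ra C$ and blows down the proper transforms of the fibers through them, arriving at a smooth quadric fibration $\cY\ra C$, which has height zero since it is trivialized after a further finite base change; tracking $c_1(\omega)^{n+1}$ through the two discrepancy formulas gives $c_1(\omega_{\cX\times_B C/C})^{n+1}=2\Delta(-n)^n$, and the formula follows. You instead compute directly on the ambient bundle $\bP(E)$ via adjunction and Segre classes. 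Your conventions are internally consistent --- with $p_*\cO_P(1)=E^{\vee}=\pi_*H$ one indeed has $[\cX]=2\xi+p^*c_1(I)$, $c_1(T_{\bP(E)/B})=(n+2)\xi+p^*c_1(E)$, $p_*\xi^{n+1}=1$, and $p_*\xi^{n+2}=-c_1(E)$, which are exactly the conventions the paper itself uses for $n=2$ in Section~\ref{sect:SDC} --- and the final arithmetic $n^n\bigl(2\deg E-(n+2)\deg I\bigr)=-n^n\Delta$ checks out against the paper's relation $\Delta=-2\deg(E)+(n+2)\deg(I)$. Your route is shorter, purely intersection-theoretic, and never uses reducedness of $\md$: it proves the formula for any relative quadric in $\bP(E)$ with $\Delta:=\deg\md$ the degree of the degeneracy divisor. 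The paper's route, by contrast, isolates the local contribution $2(-n)^n$ of each singular fiber, which is the degeneration-theoretic viewpoint exploited throughout the rest of the paper. One point you should make explicit: the embedding $\cX\hookrightarrow\bP((\pi_*H)^{\vee})$ presupposes the relative hyperplane bundle $H$, which the paper constructs only when the Brauer group of $k(B)$ is trivial (e.g., $k$ algebraically closed); since both $h(\cX)$ and $\Delta$ are unchanged under extension of the base field, a one-line reduction to $k=\bar{k}$ is needed --- the same reduction is implicit in the paper's own proof.
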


\begin{proof}
Let $C \ra B$ be a simply branched double cover whose branch locus contains the discriminant.
As we have seen, pulling back to $C$ increases the height:
$$h(\cX \times_{B} C)
=2h(\cX).$$

Let $x_1,\ldots,x_{\Delta}$ denote the singularities of the fibers of $\cX\times_B C\ra C$.  
We have a modification
$$\begin{array}{ccc}
\tilde{\cY} & \ra & \cY \\
\downarrow                     &     &     \\
\cX \times_B C           &  	&
\end{array}
$$
obtained by blowing up the $x_j$ and then blowing down the proper transforms
of the fibers 
of $\cX \times_B C\ra C$ containing these points.  
The resulting $\cY \ra C$ is a smooth quadric fibration.  

Let $E_1,\ldots,E_{\Delta}$ be the exceptional divisors of 
$\beta:\tilde{\cY} \ra \cX \times_B C$ over the ordinary
singularities $x_1,\ldots,x_{\Delta}$;  
in particular, $E_j$ is a smooth quadric of dimension $n$ and
$E_j^{n+1}=(-1)^n 2$.  
The discrepancy formula
$$\omega_{\tilde{\cY}}=\beta^*\omega_{\cX\times_BC}+(n-1)\sum_{j=1}^{\Delta}E_j$$
implies
$$c_1(\omega_{\tilde{\cY}/C})^{n+1}=c_1(\omega_{\cX\times_{B}C/C})^{n+1} + 
\Delta(n-1)^{n+1} (-1)^n 2.$$

On the other hand, let $F_1,\ldots,F_{\Delta}$ denote the exceptional divisors of
$\gamma:\tilde{\cY} \ra \cY$;
$F_j \simeq \bP(\cO_Q \oplus \cO_Q(1))$, where $Q$ is a smooth
quadric of dimension $n-1$ (two points when $n=1$).  
It follows that 
$$\gamma^*c_1(\omega_{\cY/C})^r\cdot F_j^{n+1-r}=(-1)^{r+1}n^r 2,$$
for $r=0,\ldots,n-1$;  we get zero when $r\ge n$.  
Here the discrepancy formula is
$$\omega_{\tilde{\cY}}=\gamma^*\omega_{\cY}+\sum_{j=1}^{\Delta}F_j.$$
Thus we find
$$\begin{array}{rcl}
c_1(\omega_{\tilde{\cY}/C})^{n+1} &=& c_1(\omega_{\cY/C})^{n+1}+\Delta \sum_{r=0}^{n-1} \binom{n+1}{r}(-1)^{r+1} n^r 2 \\
				&=& c_1(\omega_{\cY/C})^{n+1}-2\Delta \sum_{r=0}^{n-1}\binom{n+1}{r}(-n)^r \\
				&=& c_1(\omega_{\cY/C})^{n+1}-2\Delta [ (1-n)^{n+1}-((-n)^{n+1}+(n+1)(-n)^n)] \\
				&=& c_1(\omega_{\cY/C})^{n+1}-2\Delta((1-n)^{n+1}-(-n)^n)
\end{array}
$$
Note that $c_1(\omega_{\cY/C})^{n+1}=0$ as $\cY \ra C$ is smooth.  
Combining the results of our discrepancy computations, 
we obtain
$$\begin{array}{rcl}
c_1(\omega_{\cX\times_BC/C})^{n+1}&=&2\Delta((n-1)^{n+1}(-1)^{n+1}-(1-n)^{n+1}+(-n)^n) \\
				  &=&2\Delta(-n)^n,
\end{array}
$$
which yields our formula.
\end{proof}

\section{Reduction to the discriminant for quadric surface fibrations}
\label{sect:reduction}
We recall the standard argument of `reduction to the discriminant',
in geometric terms.  Let $\cX \ra B$ be a quadric surface fibration 
with square-free discriminant and generic fiber $X$.
These fibrations were studied by Bhosle \cite{BhosleMA}, especially when 
$B=\bP^1$.

\subsection*{The basic construction}
Let $\cF:=F_1(\cX) \ra B$ denote the space of lines in fibers of $\pi$;
its Stein factorization
$$\cF \ra C \stackrel{g}{\ra} B$$
is the composition of a {\em smooth} $\bP^1$-bundle and a double
cover branched along the discriminant divisor $\md$.
Let $\iota:C \ra C$ denote the covering involution.

Each section of $\pi:\cX \ra B$ yields a section of 
$\cF \ra C$ and vice versa.  Indeed, for $\sigma:B \ra \cX$ consider
the pair of lines containing $\sigma(B)$, which is a section of 
$\cF \ra C$.  Conversely, for each section $\tau:C \ra \cF$
we can take the intersection of lines
$$\ell_{\tau(c)} \cap \ell_{\tau(\iota(c))} \in  \cX_{g(c)},$$
which is a section.
Note that the universal
line over $\cF$ is a double cover of $\cX$.  

\subsection*{Reversing the construction}
Suppose that $g:C \ra B$ is a flat morphism of smooth projective curves of degree two;
we assumed the characteristic is different from two, so $g$ is tamely ramified over
a divisor $\md \subset B$. 
Fix a $\bP^1$-bundle $\cF \ra C$, which can be expressed as the projectivization of a 
vector bundle.  Restriction of scalars (Weil restriction) gives a projective morphism
$$\varpi:\Res_{C/B}(\cF) \ra B;$$
this can be interpreted as the Hilbert scheme of length-two punctual subschemes
of fibers of $\cF \ra C$.  Thus for $b\in (B\setminus \md)(\bar{k})$ we have
$$\varpi^{-1}(b)= \cF_{c_1} \times \cF_{c_2}\simeq  \bP^1 \times \bP^1, \quad g^{-1}(b)=\{c_1,c_2\},$$
geometrically a smooth quadric surface.  

Over points of the branch divisor $b\in \md\subset B$, the fiber 
$\varpi^{-1}(b)$ is set-theoretically $\Sym^2(\cF_b)\simeq \bP^2$, but non-reduced of multiplicity two.  
However, the restriction of scalars can be modified as follows:
$$\begin{array}{rcccl}
    &     &   \tilde{\cX} & & \\
    & \stackrel{\beta}{\swarrow} &        &\stackrel{\gamma}{\searrow} & \\
\Res_{C/B}(\cF) &   &    &   &   \cX \\
    & \stackrel{\varpi}{\searrow} &        &\stackrel{\pi}{\swarrow} & \\
    &     &   B & &
\end{array}
$$
where the arrows have the following definitions:
\begin{itemize}
\item  $\beta$ is obtained by blowing up the
diagonal in $\Sym^2(\cF_b)$ over each point $b\in \md$;
\item $\gamma$ is obtained by blowing down the proper transform
of $\varpi^{-1}(b)$ in $\tilde{\cX}$ over each point $b\in \md$;
\item $\pi$ is the induced morphism back to $B$.
\end{itemize}
A local computation over each $b \in \md$ shows that the fiber $\cX_b$ is isomorphic
to a quadric surface with isolated singularity.

\subsection*{Riemann-Roch computations}
Regard the space of sections $\Sect(\cX/B,h)$ as an open subscheme of the Hilbert scheme of $\cX$.
Its tangent space at $\sigma:B\ra \cX$ is
$$T_{[\sigma]}\Sect(\cX/B,h)=\Gamma(N_{\sigma}).$$
The Riemann-Roch formula gives
$$\chi(N_{\sigma})=h_{\omega^{-1}_{\pi}}(\sigma)+2(1-\sg(B)),$$
which implies
$$\dim_{\sigma}\Sect(\cX/B,h)\le \dim T_{\sigma}\Sect(\cX/B,h) =
h^0(N_{\sigma})\ge h + 2(1-\sg(B)),$$
with equality when $N_{\sigma}$ has no higher cohomology.  
It is possible for $h_{\omega^{-1}_{\pi}}(\sigma)<0$, but such sections are
typically confined to subvarieties of $\cX$ (see Remark~\ref{rema:bound}).
In characteristic zero, sections
with deformations dominating $\cX$ have normal bundles that are globally
generated at the generic point, and thus have positive degree.

The discriminant construction gives an alternate approach.
When $k$ is algebraically closed or finite, we
may interpret $\cF\simeq \bP(V)$ for a 
rank-two vector bundle $V \ra C$.  We are using the
fact that the Brauer group of a projective curve over a finite field 
is trivial, essentially by class field theory.   
Let $\cO_{\bP(V)}(1)$ denote the resulting polarization.
Let $\Sect(\cF/C)$ denote the space of sections $\tau:C \ra \cF\simeq \bP(V)$,
again regarded as an open subscheme of the Hilbert scheme of $\cF$.
We have a morphism
$$\begin{array}{rcl}
\alpha: \Sect(\cF/C) & \ra & \Pic(C) \\
\tau & \mapsto & \tau^*\cO_{\bP(V)}(1)
\end{array}
$$
with fibers corresponding to extensions
$$0 \ra N \ra V^{\vee} \ra \tau^*\cO_{\bP(V)}(1) \ra 0.$$
These yield elements of
$$\Hom(N,V^{\vee})=N^{\vee} \otimes V^{\vee}=N^{\vee} \otimes \bigwedge^2 V^{\vee} \otimes V=
\tau^*\cO_{\bP(V)}(1) \otimes V.$$
Given $L \in \Pic(C)$, the sections with $\tau^*\cO_{\bP(V)}(1)=L$
lie in the projectivization 
$\bP(\Gamma(V\otimes L)).$
Thus for $d:=\deg(L)$ sufficiently large, the sections form a Zariski-open dense
subset of a projective
bundle over $\Pic^d(C)$.  The boundary points correspond to unions of sections
with fibers of $\bP(V) \ra C$, reflecting homomorphisms $V^{\vee} \ra \tau^*\cO_{\bP(V)}(1)$ with 
non-vanishing cokernel.
The Riemann-Roch formula implies
$$\chi(V\otimes L) = 2d+\deg(V)+2(1-\sg(C))=2d+\deg(V)-2\sg(C)+2.$$

We summarize this as follows:
\begin{prop}
Retain the notation introduced above, including the choice of a vector bundle $V$
such that $\cF\simeq \bP(V)$.  For each $h\in \bZ$, there exists a $d\in \bZ$
and a morphism
$$\gamma_h:\Sect(\cX/B,h) \ra \Pic^d(C).$$
For $h\gg 0$ this is the composition of an open 
immersion with a projective bundle of relative dimension
\begin{equation} \label{eqn:RR}
2d+\deg(V)-4\sg(B)-\Delta+3.
\end{equation}
\end{prop}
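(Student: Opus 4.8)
The plan is to carry the whole computation over to the discriminant double cover $C$, where sections of $\cF\ra C$ are manifestly governed by a projective bundle, and then to match invariants. Concretely, $\gamma_h$ will be the restriction to $\Sect(\cX/B,h)$ of the morphism $\alpha:\Sect(\cF/C)\ra\Pic(C)$, $\tau\mapsto\tau^*\cO_{\bP(V)}(1)$, under the identification of sections furnished by reduction to the discriminant.

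First I would upgrade the set-theoretic correspondence between sections of $\pi$ and sections of $\cF\ra C$ to an isomorphism of Hilbert schemes. A section $\sigma$ sends $b$ to the pair of lines through $\sigma(b)$, hence to a section $\tau:C\ra\cF$; conversely $\tau$ recovers $\sigma$ via $c\mapsto \ell_{\tau(c)}\cap\ell_{\tau(\iota(c))}$. Both assignments are functorial in the base --- the reverse direction being exactly the Weil restriction adjunction combined with the modification $\beta,\gamma$ across $\md$ --- so they identify $\Sect(\cX/B,h)$ isomorphically with a union of connected components of $\Sect(\cF/C)$.

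The step I expect to be the main obstacle is showing that the height $h$ singles out one degree $d:=\deg(\tau^*\cO_{\bP(V)}(1))$, so that $\gamma_h$ lands in a single $\Pic^d(C)$. Here I would compare $\sigma^*\omega^{-1}_{\pi}$ with the section $\hat\sigma:B\ra\Res_{C/B}(\cF)$ corresponding to $\tau$ and with the tautological class $M$ whose restriction to each smooth fiber $\cF_{c_1}\times\cF_{c_2}\simeq\bP^1\times\bP^1$ is the bidegree-$(1,1)$ bundle. On such a fiber $\omega^{-1}_{\pi}$ restricts to $\cO(2,2)=M^{\otimes 2}$, while $\hat\sigma^*M=\mathrm{Nm}_{C/B}(\tau^*\cO_{\bP(V)}(1))$ has degree $d$. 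Tracking the discrepancies of the blow-up $\beta$ and blow-down $\gamma$ over $\md$ --- the same modification used in the proof of Proposition~\ref{prop:quadricformulaB} --- shows that $\gamma^*\omega^{-1}_{\pi}$ differs from $M^{\otimes 2}$ only by a line bundle pulled back from $B$ together with correction terms supported on the exceptional loci, all independent of the chosen section. Hence $h=2d+c_{\pi}$ for a constant $c_{\pi}$ depending only on the fibration, and each $h$ determines a unique admissible $d$.

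With $d$ fixed, I would invoke the analysis of $\alpha$ recorded above: for each $L\in\Pic^d(C)$ the sections with $\tau^*\cO_{\bP(V)}(1)=L$ form a Zariski-open dense subset of $\bP(\Gamma(V\otimes L))$, the complement parametrizing degenerate configurations (unions of sections with fibers of $\cF\ra C$), which are not honest sections of $\cX$. For $d\gg 0$ --- equivalently $h\gg 0$ --- one has $h^1(V\otimes L)=0$ for every $L\in\Pic^d(C)$, so cohomology and base change makes the relevant pushforward locally free; its projectivization is a projective bundle $P\ra\Pic^d(C)$ into which the open-dense loci assemble as an open immersion $\Sect(\cX/B,h)\hookrightarrow P$ factoring $\gamma_h$. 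Finally the relative dimension is $h^0(V\otimes L)-1=\chi(V\otimes L)-1=2d+\deg(V)-2\sg(C)+1$ by Riemann--Roch on $C$; eliminating $\sg(C)$ through Riemann--Hurwitz for the double cover $g$, namely $2\sg(C)-2=2(2\sg(B)-2)+\Delta$, gives exactly $2d+\deg(V)-4\sg(B)-\Delta+3$.
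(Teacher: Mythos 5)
Your proposal is correct and is essentially the paper's own argument: reduction to the discriminant identifies $\Sect(\cX/B,h)$ with the degree-$d$ sections of $\cF=\bP(V)\ra C$, the assignment $\tau\mapsto\tau^*\cO_{\bP(V)}(1)$ defines $\gamma_h$, its fibers are Zariski-open dense subsets of $\bP(\Gamma(V\otimes L))$ (the complement being the broken sections), and for $d\gg 0$ these assemble into a projective bundle over $\Pic^d(C)$ of relative dimension $\chi(V\otimes L)-1=2d+\deg(V)-2\sg(C)+1$, which becomes (\ref{eqn:RR}) via Riemann--Hurwitz for the double cover $g$. The only point where you supply more than the paper does is the geometric derivation of the linear relation $h=2d+c_\pi$ pinning down $d$ (the paper records the equivalent relation (\ref{eqn:hV}) only after the proposition, by comparing expected dimensions), and your sketch of that step is sound because, with square-free discriminant, sections avoid the vertices of the singular fibers, so the correction terms supported over $\md$ are indeed the same for every section.
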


The morphism $\gamma_h$ and the integer $d$ are not canonical, but depend on the choice of $V$.  
Nevertheless, comparing the expected dimensions for $\Sect(\cX/B)$ and $\Sect(\cF/C)$ we find 
$$h + 2(1-\sg(B))=2d+\deg(V) -4\sg(B)-\Delta +3 +\dim \Pic^d(C),$$
which yields the relation
\begin{equation} \label{eqn:hV}
h=2d+\deg(V)-\frac{\Delta}{2}.
\end{equation}

\subsection*{A useful congruence}
Assume $k$ is algebraically closed.  Recall
the set-up in the proof of Proposition~\ref{prop:quadricformulaB}:
We have the base-changed family
$\cX\times_B C \ra C$, the singularities $x_1,\ldots,x_{\Delta} \in \cX \times_B C$,
and the modification:
$$\begin{array}{ccc}
\tilde{\cY} & \ra & \cY \\
\downarrow                     &     &     \\
\cX \times_B C           &  	&
\end{array}
$$

Consider the elementary transformation of $g^*\pi_*H$ associated
with the ordinary
double points \cite{Maruyama}
\begin{equation} \label{eqn:ET}
0 \ra W  \ra g^*\pi_*H \ra Q \ra 0,
\end{equation}
where $Q$ is a skyscraper sheaf supported at $\{x_1,\ldots,x_{\Delta}\}$ 
with length one at each point. 
We can compute
$$\begin{array}{rcl}
\deg(W)&=&\deg(g^*\pi_*H)-\deg(Q)=-2\deg(E)-\Delta\\
&=&
\begin{cases} 0 & \text{ if } \epsilon(\pi)\equiv 0\pmod{2} \\
 -4 & \text{ if } \epsilon(\pi)\equiv 1\pmod{2}.
\end{cases}
\end{array}
$$

The geometric interpretation of the elementary transformation gives
an embedding
$$\cY \hookrightarrow \bP(W^{\vee}).$$
The Fano variety of lines is a disjoint union
$$\cF(\cY/C)=\cF \sqcup \iota^*\cF=\bP(V) \sqcup \bP(\iota^*V).$$
Indeed, $\cF \times_B C$ is non-normal over the discriminant, reflecting
the fact that the two rulings of a smooth quadric surface both specialize to
the rulings of the quadric cone (see \cite[\S 3]{HVAV} for further details).  
In particular, $\cY \ra C$ is the Segre embedding of a product of two copies of $\bP^1$,
isomorphic to $\bP(V)$ and $\bP(\iota^*V)$.  
Rescaling $V$ by tensoring with a suitable line bundle $L$, we can express
$$(V\otimes L) \otimes \iota^*(V\otimes L) \simeq W^{\vee},$$
i.e., $\deg(V\otimes L)=0$ or $1$ depending on the parity of $\epsilon(\pi)$:
\begin{equation} \label{eqn:parity}
\deg(V\otimes L)\equiv \epsilon(\pi)\pmod{2}.
\end{equation}
Eliminating $\epsilon(\pi)$ from the expressions for $\deg(E)$ and $\deg(V)$,
we find 
\begin{equation} \label{eqn:quadricformula}
4\deg(V)\equiv \Delta - 2 \deg(\pi_*H)=\Delta+2\deg(E) \pmod{8}
\end{equation}
This is true regardless of how we normalize $H$ or $V$.

\begin{rema}
The key here is the coincidence of Lie theory
$$\mathfrak{so}(4,\bC)=\mathfrak{sl}(2,\bC) \oplus \mathfrak{sl}(2,\bC),$$
reflecting the equivalence of Dynkin diagrams
$$D_2=A_1 \cup A_1.$$

Bichsel and Knus \cite{BK} compute Clifford algebras
for rank-four quadratic forms taking values in invertible sheaves.  
This gives an alternate approach to the varieties of
maximal isotropic subspaces of $\pi:\cX \ra B$.  Knus, Parimala,
and Sridharan \cite{KPS} develop the dictionary discussed
here using the language of quadratic forms over an affine base.
Auel \cite[\S 5.3]{Auel} and Auel-Bernardara-Bolognesi \cite[Thm.~2.24]{ABB}
address this over more general base schemes.
\end{rema}

\section{Census of quadric fibrations over $\bP^1$}
\label{sect:casebycase}

Our approach here has connections to the work of Ramanan and 
Bhosle on vector bundles over hyperelliptic curves
\cite{DesRam,Bhosle84,Bhosle98,Bhosle2002,Bhosle2010}.
It would be very interesting to work out a complete dictionary
between their work and our approach, with particular attention
to degenerations of the hyperelliptic curves.

We assume $B\simeq \bP^1$ and the discriminant curve $C$ has genus $\sg$.
Equation~\ref{eqn:quadricformula} is equivalent to
\begin{equation} \label{eqn:formula}
\deg(\pi_*H)\equiv \sg+1-2\deg(V) \pmod{4}.
\end{equation}
The expected dimension of the space
of sections over a fixed $L \in \Pic(C)$ (Equation~\ref{eqn:RR}) can be written
\begin{equation} \label{eqn:RRzero}
\chi(V\otimes L)-1=\deg(V\otimes L)-2\sg+1.
\end{equation}
The normalized bundle $E\sim (\pi_*H)^{\vee}$ satisfies
$$\deg(E)=\begin{cases} -\sg-1 & \text{ if $\epsilon(\pi)\equiv 0\pmod{2}$} \\
                        -\sg+1 & \text { if $\epsilon(\pi)\equiv 1\pmod{2}$.}
	\end{cases}
$$
We can decompose
$$
\begin{array}{c}\pi_*H=\cO_{\bP^1}(-a_1) \oplus
\cO_{\bP^1}(-a_2) \oplus
\cO_{\bP^1}(-a_3) \oplus
\cO_{\bP^1}(-a_4), \\ 
a_1 \le a_2 \le a_3 \le a_4;
\end{array}
$$
for simplicity, from now on assume this is as `balanced' as possible, i.e., $a_4-a_1 \le 1$.  
We refer the reader to Section 1 of \cite{LPS} for a more thorough
classification.  
\begin{rema}
Any vector bundle $E_0$ on $\bP^1$ admits a small deformation to a balanced 
vector bundle $E$, i.e., 
$E\simeq \oplus_{j=1}^r \cO_{\bP^1}(-m_j)$ where $|m_i-m_j|\le 1$ for each $i,j=1,\ldots,r$.
Indeed, the splitting $E_0\simeq \oplus_{j=1}^r\cO_{\bP^1}(-n_j)$ 
can be deformed to a non-trivial
extension that is balanced \cite{Shatz}.  If $\Sym^2(E_0^{\vee})\otimes I$ is globally generated (here $I$ 
is invertible of degree $0$ or $1$, as in Section~\ref{sect:QFDH}) then 
$H^1(\Sym^2(E_0^{\vee})\otimes I)=0$.
Cohomology-and-base-change implies that sections in 
$\Gamma(\bP^1,\Sym^2(E_0^{\vee})\otimes I)$ arise as specializations of 
sections in $\Gamma(\bP^1,\Sym^2(E^{\vee})\otimes I)$.  Thus the balanced
bundles we consider are generic for large classes of quadric surface fibrations.
\end{rema} 

\subsection*{Case 1} Here we have $a_1=a_2=a_3=a_4$ 
so that 
$$\bP((\pi_*H)^{\vee})\simeq \bP(\cO_{\bP^1}^{\oplus 4}))\simeq \bP^1 \times \bP^3.$$
The equation of $\cX$ is a form of bidegree $(n,2)$,
thus $h(\cX)=16n$, $\Delta=4n$, and
$\sg=2n-1$.  
The normalized rank-four bundle is
$$E=\begin{cases}  \cO_{\bP^1}(-m)^{\oplus 4} & \text{ if $n=2m+1$ odd} \\
  \cO_{\bP^1}(-m)^{\oplus 4} & \text{ if $n=2m$ even.} 
	   \end{cases}
$$ 
Equation~\ref{eqn:formula} implies that
$\deg(V) \equiv n \pmod{2}$.
In light of (\ref{eqn:RRzero}), we take 
$$\deg(V)=\begin{cases}   4n-3=2\sg-1 & \text{ if $n$ odd} \\
			  4n-2=2\sg   & \text{ if $n$ even.}
	   \end{cases}
$$ 

A form of bidegree $(n,2)$ depends on $(n+1)\cdot 10-1=10n+9$ parameters;
taking into account the automorphisms of $\bP^1 \times \bP^3$,
we are left with $10n-9=5\sg-4$ free parameters.

\subsection*{Case 2} Here we have $a_1+1=a_2=a_3=a_4$ so that
$$\bP((\pi_*H)^{\vee})\simeq \bP(\cO_{\bP^1}(-1) \oplus \cO_{\bP^1}^{\oplus 3}))
\hookrightarrow \bP^1 \times \bP^4.$$
The $\bP^3$-bundle is given by a form of degree $(1,1)$,
and the second equation defining $\cX$ has bidegree $(n,2)$.  
We have $\omega_{\cX/\bP^1}=\cO_{\cX}(n+1,-2)$ and $h(\cX)=16n+8$;
then $\Delta=4n+2$ and $\sg=2n$.  
The normalized twist of $(\pi_*H)^{\vee}$ is:
$$E=\begin{cases}  \cO_{\bP^1}(-m-1)\oplus \cO_{\bP^1}(-m)^{\oplus 3} & \text{ if $n=2m+1$ odd} \\
  \cO_{\bP^1}(-m-1)\oplus \cO_{\bP^1}(-m)^{\oplus 3} & \text{ if $n=2m$ even.} 
	   \end{cases}
$$ 
Formula (\ref{eqn:formula}) implies
$\deg(V) \equiv n \pmod{2}$.
In light of (\ref{eqn:RRzero}), we take 
$$\deg(V)=\begin{cases}   4n-1=2\sg-1 & \text{ if $n$ odd} \\
			  4n=2\sg   & \text{ if $n$ even.}
	   \end{cases}
$$ 

Note that $\cX$ is cut out by a form of bidegree $(1,1)$ and a 
form of bidegree $(n,2)$, determined modulo multiples of the 
first form.  The former depends on nine parameters, the latter
on $10n+14$ parameters (even when $n=0$).  Taking automorphisms
of $\bP^1 \times \bP^4$ into account, we are left with a total of
$$10n-4=5\sg-4$$
free parameters.  When $n=0$, this should be understood to mean that
the families admit positive-dimensional automorphism groups.  

\subsection*{Case 3} Here we have $a_1+1=a_2+1=a_3=a_4$ hence
$$\bP((\pi_*H)^{\vee})\simeq \bP(\cO_{\bP^1}(-1)^{\oplus 2} \oplus \cO_{\bP^1}^{\oplus 2}))
\hookrightarrow \bP^1 \times \bP^5.$$
The $\bP^3$-bundle is given by two forms of degree $(1,1)$, with $\cX$ defined by one
additional equation of bidegree $(n,2)$.  The dualizing sheaf 
$\omega_{\cX/\bP^1}=\cO_{\cX}(n+2,-2)$, $\Delta=4n+4$, $\sg=2n+1$, and 
$$h(\cX)=16n+16.$$
Here we have
$$E=\begin{cases}  \cO_{\bP^1}(-m-1)^{\oplus 2} \oplus \cO_{\bP^1}(-m)^{\oplus 2} & \text{ if $n=2m+1$ odd} \\
  \cO_{\bP^1}(-m-1)^{\oplus 2}\oplus \cO_{\bP^1}(-m)^{\oplus 2} & \text{ if $n=2m$ even.} 
	   \end{cases}
$$ 
Formula (\ref{eqn:formula})
implies 
$\deg(V) \equiv n \pmod{2}$.
To get smallest possible non-negative dimensions in (\ref{eqn:RRzero}), we take 
$$\deg(V)=\begin{cases}   4n+1=2\sg-1 & \text{ if $n$ odd} \\
			  4n+2=2\sg   & \text{ if $n$ even.}
	   \end{cases}
$$ 

We compute the number of free parameters:  The forms of bidegree $(1,1)$
correspond to a point of $\Gr(2,\Gamma(\cO_{\bP^1 \times \bP^5}(1,1)))$,
which has dimension $20$.  When $n>0$, the form of bidegree $(n,2)$ modulo the first
two forms depends on $(n+1)21-2(n)6+(n-1)-1=10n+19$ parameters.  Taking
automorphisms into account, we obtain
$$20+(10n+19)-38=10n+1=5(2n+1)-4=5\sg-4$$
parameters.  When $n=0$ and $\sg=1$, the construction depends on two parameters.  

\subsection*{Case 4} In this case $a_1=a_2=a_3=a_4-1$ hence
$$\bP((\pi_*H)^{\vee})\simeq \bP(\cO^{\oplus 3}_{\bP^1}(-1) \oplus \cO_{\bP^1}))
\hookrightarrow \bP^1 \times \bP^6.$$
The $\bP^3$-bundle is given by three forms of degree $(1,1)$, with $\cX$ defined by one
additional equation of bidegree $(n,2)$.  The dualizing sheaf 
$\omega_{\cX/\bP^1}=\cO_{\cX}(n+3,-2)$, $\Delta=4n+6$, $\sg=2n+2$, and 
$$h(\cX)=16n+24.$$
In this case
$$E=\begin{cases}  \cO_{\bP^1}(-m-1)^{\oplus 3} \oplus \cO_{\bP^1}(-m) & \text{ if $n=2m+1$ odd} \\
  \cO_{\bP^1}(-m-1)^{\oplus 3}\oplus \cO_{\bP^1}(-m) & \text{ if $n=2m$ even.} 
	   \end{cases}
$$ 

Formula (\ref{eqn:formula}) implies
$\deg(V) \equiv n \pmod{2}$.  
In light of (\ref{eqn:RRzero}), we take 
$$\deg(V)=\begin{cases}   4n+3=2\sg-1 & \text{ if $n$ odd} \\
			  4n+4=2\sg   & \text{ if $n$ even.}
	   \end{cases}
$$

We compute free parameters:  The forms of bidegree $(1,1)$
correspond to a point of $\Gr(3,\Gamma(\cO_{\bP^1 \times \bP^6}(1,1)))$,
which has dimension $33$.  The form of bidegree $(n,2)$, modulo the first
three forms, depends on 
$$
(n+1)28-3(n)7+3(n-1)-1=10n+24
$$ 
parameters.  Taking
automorphisms into account, we obtain
$$33+(10n+24)-51=10n+6=5(2n+2)-4=5\sg-4$$
parameters.  

\begin{rema} \label{rema:genuszero}
In our analysis the case where the discriminant curve has
genus zero stands out;  we have yet to exhibit an
example where $\sg=0$ and $\deg(V)$ is odd.  This
may be interpreted as the $n=-1$ instance of Case 4
above.  

Specifically, there are quadric surface fibrations 
$$\cX \subset \bP((\pi_*H)^{\vee})\simeq \bP(\cO^{\oplus 3}_{\bP^1}(-1) \oplus \cO_{\bP^1})$$
that do not arise as restrictions of hypersurfaces in $\bP^1 \times \bP^6$.  
These correspond to global sections of
$$\Sym^2(\pi_*H) \otimes \cO_{\bP^1}(-1)= \cO_{\bP^1}(1)^{\oplus 6} \oplus
\cO_{\bP^1}^{\oplus 3} \oplus \cO_{\bP^1}(-1),$$
which necessarily contain the distinguished section $\sigma:\bP^1 \ra \bP((\pi_*H)^{\vee})$.  
Projecting from $\sigma$, we obtain
$$\begin{array}{rcccl}
 	&	&   \Bl_{\sigma(\bP^1)}(\cX)  &		&  	\\
        & \swarrow &			&\stackrel{\beta}{\searrow}    & 	\\
 \cX    &	   &			&	    & \bP^2 \times \bP^1 \\
        & \searrow &			& \swarrow    & 	\\
	&	   &	\bP^1		&	  	&
\end{array}
$$
where $\beta$ blows up a genus-zero bisection $\cZ \subset \bP^2 \times \bP^1 \ra \bP^1$.  
The bisection $\cZ$ is a complete intersection of hypersurfaces of bidegrees $(1,0)$
and $(2,1)$ in $\bP^2 \times \bP^1$.  The former takes the form $\ell \times \bP^1$,
where $\ell \subset \bP^2$ is a line, 
and coincides with the proper transform of the 
exceptional divisor of $\Bl_{\sigma(\bP^1)}(\cX) \ra \cX$.
Constant sections of $\bP^2 \times \bP^1 \ra \bP^1$ induce sections of $\pi:\cX \ra \bP^1$;
points of $\ell$ give rise to reducible curves, consisting of
the union of $\sigma(\bP^1)$ and a line in a fiber of $\pi$ incident to $\sigma(\bP^1)$.

The families constructed here admit positive-dimensional automorphism groups.
\end{rema}

We summarize our computations in the following table:

\

\begin{tabular}{|r|c|c|c|}
\hline
Case	& $\Delta\pmod{8}$   & $n\equiv\deg(V)\pmod{2}$  & $\sg\pmod{4}$ \\
\hline
  1     &        0		 &      0		   &   -1 \\
	& 	 4		 &      1		   &	1 \\
  2     & 	 2		 &	0		   &    0 \\
	&	 6		 &	1		   &    2 \\
  3     &        4		 &      0		   &    1 \\
        &        0		 &      1 		   &   -1 \\
  4     &        6               &	0		   &    2 \\
 	&        2		 &      1		   &    0 \\
\hline
\end{tabular}

\subsection*{Parameter counts and relations to moduli spaces of bundles}
When $V$ is a rank-two vector bundle over $C$, we have 
$$\chi(\End(V))=4(1-\sg).$$
When $V$ is simple, the moduli space has dimension $4\sg-3$;  fixing the
determinant gives a moduli space of dimension $3\sg-3$.  
Taking into account the fact that $\bP(V\otimes L)\simeq \bP(V)$
for each $L \in \Pic(C)$, 
the corresponding moduli space of $\bP^1$-bundles over $C$
also depends on $3\sg-3$ parameters.  
Hyperelliptic curves depend on $2\sg-1$ parameters so the
total number of free parameters is
$$3\sg-3+2\sg-1=5\sg-4,$$
the number of free parameters we observed in each case.

\section{Hecke correspondences and elementary transformations}  
\label{sect:hecke}
The data tabulated above exhibit an involution preserving 
$\Delta\pmod{8}$ and $\sg\pmod{4}$ but reversing the parity of 
$\deg(V)$ and altering $\deg(\pi_*H)\pmod{4}$ by two.  This can be explained
geometrically 
via elementary transformations. 

Fix a smooth fiber $\cX_p$ of $\pi$ and
a line $\ell \subset \cX_p$.  Applying an elementary transformation along
$\ell$ converts $\bP(\cO_{\bP^1}^{\oplus 4})$ to $\bP(\cO_{\bP^1}(-1)^{\oplus 2}
\oplus \cO_{\bP^1}^{\oplus 2})$ (resp.
$\bP(\cO_{\bP^1}(-1) \oplus \cO_{\bP^1}^{\oplus 3})$ to $\bP(\cO_{\bP^1}(-1)^{\oplus 3}
\oplus \cO_{\bP^1})$).  The proper transform $\tilde{\pi}:\tilde{\cX}\ra \bP^1$
of $\cX$ is still a quadric surface fibration
with the same degenerate fibers.   
This also induces an elementary transformation of $\cF(X)=\bP(V)\ra C$ 
at the point $\ell$, which is $\cF({\tilde X})$;  this changes the parity
of the degree of this rank-two bundle.  

This process does change the heights of sections of $\pi:\cX \ra \bP^1$.
Suppose that $\sigma:\bP^1 \ra \cX$ is a section disjoint from $\ell$,
with proper transform $\tilde{\sigma}:\bP^1 \ra \tilde{\cX}$.  The
birational map $\cX \dashrightarrow \tilde{\cX}$ factors
$$\begin{array}{rcccl}
 	&	& \Bl_{\ell}(\cX)    &    &  \\
        &\swarrow &           & \searrow &   \\
  \cX   &        &            &         & \tilde{\cX},
\end{array}
$$
where the right arrow blows down the proper transform of $\ell$.  Thus we find
$$\deg({\tilde \sigma}^*\omega_{\tilde{\cX}/\bP^1})=
\deg(\sigma^*\omega_{\cX/\bP^1})-1$$
and 
$$h_{\omega^{-1}_{\tilde{\pi}}}(\tilde{\sigma})=
h_{\omega^{-1}_{\pi}}(\sigma)+1.
$$
Thus taking elementary transformations along lines incident to a section
reduces the height of that section.  

If we apply {\em two} elementary transformations to $\cX \ra \bP^1$, the
resulting quadric fibration has the same numerical invariants and 
discriminant curve $C\ra B$.  And of course, the resulting fibrations
are birational over $\bP^1$ but not isomorphic, as the corresponding vector
bundles are related by an elementary transformation.  This is an instance
of a {\em Hecke correspondence} on the moduli space of vector bundles over $C$;
these have been studied by many authors, e.g., \cite{NS}.

\section{Stable bundles and cohomology}
\label{sect:SBC}
In Section~\ref{sect:reduction}, we saw how to translate the existence of sections
of quadric surface bundles $\cX\ra B$ to the existence of sections
of a ruled surface $\cF\simeq \bP(V) \ra C$, where $C$ is the discriminant double cover of $B$
and $V$ is a rank-two vector bundle over $C$.  
The behavior of the sections of a ruled surface depends on the characteristics
of this vector bundle;  in general, there is little we can say uniformly without making
some assumptions on the vector bundle.  

Throughout this section, $C$ is a smooth projective curve of genus $\sg$ over an
algebraically closed field.  
Recall that a locally-free sheaf $V$ over $C$ is {\em stable} (resp.~{\em semi\-stable})
if, for every locally-free quotient
$$V \ra W \ra 0$$
we have 
$$\deg(W)/\rank(W) > (\text{ resp.} \ge)\, \deg(V)/\rank(V).$$
A vector bundle is {\em stable} if the associated locally-free sheaf of sections is stable.  
The stability of a vector bundle is not affected by tensoring it by a line bundle, or by
taking its dual.

\subsection*{General facts on rank-two bundles}

Here we collect more refined vanishing results for 
stable bundles of rank two, which will be useful for effective estimates 
for the numbers of sections with prescribed properties.  

Let $V$ be a rank-two vector bundle over a $C$ and $M\subset V$
an invertible subsheaf of maximal degree.  Recall that $V/M$
is invertible and
$$\Gamma(V \otimes M_1^{-1})=0$$
for all invertible $M_1$ with $\deg(M_1)>\deg(M)$ \cite[V.2]{Hart}.
This implies that $\dim \Gamma(V\otimes M^{-1}) \le 2$.  

\begin{prop}  \label{prop:bounds}
Let $V$ be a 
vector bundle on $C$ of rank two.  
\begin{itemize}
\item
An invertible subsheaf of maximal degree $M\subset V$ satisfies
\cite{Nagata}
$$\frac{\deg(V)-\sg}{2} \le \deg(M).$$
If $V$ is semistable then $\deg(M)\le \deg(V)/2$.  
\item
If $V$ is semistable then for a generic line bundle
$L$ on $C$ of degree zero we have
\cite[Prop. 1.6.2]{Raynaud}
$$h^0(V\otimes L)=\mathrm{max}(0,\chi(V)).$$
\end{itemize}
\end{prop}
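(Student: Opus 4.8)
The plan is to treat the two bullets separately, obtaining the semistable half of the first inequality directly from the definition and reserving the substance for Nagata's existence statement and Raynaud's generic vanishing. For the upper bound, let $M\subset V$ be a sub-line-bundle of maximal degree. Maximality forces $M$ to be saturated, so $V/M$ is an \emph{invertible} quotient; applying the semistability inequality to the surjection $V\to V/M\to 0$ gives $\deg(V/M)\ge\deg(V)/2$, whence $\deg(M)=\deg(V)-\deg(V/M)\le\deg(V)/2$. For the Nagata lower bound I would argue by a Riemann--Roch dimension count on the Jacobian. A sub-line-bundle of degree $e$ is, after saturating, the image of a nonzero map $N\to V$ with $\deg(N)=e$, that is, a nonzero element of $\Gamma(V\otimes N^{-1})$. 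Letting $N$ range over $\Pic^e(C)$, the locus where $\Gamma(V\otimes N^{-1})\ne 0$ is the degeneracy locus of a map of vector bundles representing the derived pushforward of the universal twist, of expected codimension $-\chi(V\otimes N^{-1})+1=2e-\deg(V)+2\sg-1$. This is $\le\dim\Pic^e(C)=\sg$ exactly when $e\le(\deg(V)-\sg+1)/2$, so for the largest such integer $e$ we get $\deg(M)\ge e\ge(\deg(V)-\sg)/2$. The one point that is not formal is the \emph{non-emptiness} of this locus: expected dimension $\ge 0$ does not by itself produce points. The main obstacle is therefore to show the associated Porteous class is nonzero on $\Pic^e(C)$; equivalently one runs the classical elementary-transformation argument, producing from a hypothetical maximal $M$ with $\deg(V)-2\deg(M)>\sg$ a sub-line-bundle of strictly larger degree, contradicting maximality.

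For the generic-vanishing statement I would first record that $\chi(V\otimes L)=\chi(V)$ for every degree-zero $L$, and that $h^0(V\otimes L)\ge\max(0,\chi(V))$ always. Since $h^0$ is upper semicontinuous on $\Pic^0(C)$, it suffices to exhibit a single $L_0$ attaining the value $\max(0,\chi(V))$. Serre duality, $h^1(V\otimes L)=h^0(V^\vee\otimes\omega_C\otimes L^{-1})$, combined with the fact that $V^\vee\otimes\omega_C$ is again semistable of rank two with $\chi(V^\vee\otimes\omega_C)=-\chi(V)$, reduces both regimes $\chi(V)\ge 0$ and $\chi(V)\le 0$ to one claim: \emph{if $W$ is semistable of rank two with $\chi(W)\le 0$, then $h^0(W\otimes L)=0$ for generic $L\in\Pic^0(C)$.} When $\deg(W)<0$ this holds for \emph{every} $L$, since a nonzero section would give a slope-zero subsheaf $\cO\hookrightarrow W\otimes L$ into a bundle of negative slope, violating semistability.

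In the remaining range $0\le\deg(W)\le 2\sg-2$ a nonzero section of $W\otimes L$ yields, after saturation, a sub-line-bundle $M\subset W$ of some degree $m$ with $0\le m\le\deg(W)/2$ together with an effective class $M\otimes L$, so the bad set of $L$ is covered by the loci $\{L:M\otimes L\in W_m\}$ as $M$ ranges over sub-line-bundles of degree $m$, where $W_m\subset\Pic^m(C)$ denotes the image of the $m$-th symmetric product. Each such locus is a translate of $W_m$, of dimension $m\le\sg-1$; bounding the family of degree-$m$ sub-line-bundles by its expected dimension $\deg(W)-2m+1-\sg$ gives total dimension at most $\deg(W)+1-\sg\le\sg-1<\sg$, so the bad set is a proper closed subset and the generic $L$ works. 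The genuine obstacle here is controlling the family of sub-line-bundles: the relevant Quot scheme can a priori exceed its expected dimension, and it is precisely semistability (via Lange's bounds, equivalently Raynaud's statement that a semistable bundle has a proper theta locus) that excludes this. These two non-formal inputs, non-emptiness of the Porteous locus and properness of the theta locus, are exactly the content carried by the citations to Nagata and Raynaud, and they are where the rank-two and semistability hypotheses are essential.
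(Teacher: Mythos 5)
The paper offers no proof of this proposition at all: both bullets are presented as quotations from the literature (Nagata for the lower bound, \cite[Prop.~1.6.2]{Raynaud} for generic vanishing), and the only claim left implicit is the semistable upper bound, which is a one-line consequence of the definition. Your treatment of that one line is correct and is the standard argument: maximality forces $M$ to be saturated, so $V/M$ is an invertible quotient, and the paper's definition of semistability applied to $V \to V/M \to 0$ gives $\deg(M) \le \deg(V)/2$. Your sketch of Nagata's bound is also a reasonable framing: you compute the expected codimension correctly and explicitly defer the non-formal point (non-emptiness) to the classical argument, which in effect is what the paper's citation does.

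There is, however, a concrete false step in your sketch of the second bullet. You claim that semistability prevents the Quot scheme $Q_m$ of degree-$m$ sub-line-bundles of $W$ from exceeding its expected dimension $\deg(W)-2m+1-\sg$. This is not true: take $W = M \oplus M$ with $\deg(M) = \sg-1$. Then $W$ is semistable (indeed polystable) with $\chi(W)=0$, but its saturated sub-line-bundles of degree $m=\sg-1$ form a $\bP^1$ (the images of $x \mapsto (ax,bx)$ for $(a:b)\in\bP^1$), whereas the expected dimension is $1-\sg < 0$. In this example your count gives $\dim Q_m + m = 1 + (\sg-1) = \sg$, so the union of translates is not visibly proper and the argument collapses. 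What saves the conclusion is that all of these sub-line-bundles are isomorphic to $M$ as line bundles, so they contribute a \emph{single} translate of $W_{\sg-1}$; i.e., the dimension count must be run on the image of $Q_m$ in $\Pic^m(C)$ (the locus of isomorphism classes $N$ admitting a saturated embedding $N \hookrightarrow W$), not on the space of embeddings. But bounding that image locus is essentially Raynaud's theta-divisor statement itself, so your proposed rescue (``Lange's bounds, equivalently Raynaud's statement that a semistable bundle has a proper theta locus'') is circular as a proof. The honest options are to cite Raynaud outright, as the paper does, or to supply an independent bound on the image locus; the Quot-scheme dimension alone cannot do the job.
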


\begin{rema} 
The original formulation of Nagata's theorem referenced above is worth mentioning:
Let $\bP(V) \ra C$, where $V$ is a rank-two
vector bundle over $C$  (not necessarily semistable).  Then there 
exists a section $\tau:C\ra \bP(V)$ such that
$$\tau(C)\cdot \tau(C) \le \sg.$$
\end{rema}

\begin{coro} \label{coro:gg}
Let $V$ be a semistable vector bundle of rank two over $C$.  
If $\deg(V) \ge 3\sg+2$ then $V\otimes L$ is globally generated with vanishing
higher cohomology for generic $L\in \Pic^0(C)$.  
When $\deg(V) \ge 4\sg-1$ then $V$ itself is globally generated with vanishing
higher cohomology.  
\end{coro}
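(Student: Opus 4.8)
The plan is to treat the two assertions by different mechanisms, since the sharper bound in the second statement forces a direct cohomological argument, while the first statement exploits the freedom in the choice of $L$. In both cases the reduction is the same: a rank-two bundle $W$ on $C$ is globally generated at $p\in C$ as soon as the evaluation $H^0(W)\to W|_p$ is surjective, and from the sequence $0\to W\otimes\cO_C(-p)\to W\to W|_p\to 0$ this surjectivity is equivalent to the injectivity of $H^1(W\otimes\cO_C(-p))\to H^1(W)$; in particular the vanishing $H^1(W\otimes\cO_C(-p))=0$ for all $p$ gives global generation together with $H^1(W)=0$.

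For the second assertion I would apply this to $W=V$ directly. By Serre duality $H^1(V\otimes\cO_C(-p))\cong H^0(V^\vee\otimes\omega_C\otimes\cO_C(p))^\vee$, where $V^\vee\otimes\omega_C\otimes\cO_C(p)$ is again a semistable rank-two bundle, of degree $-\deg(V)+4\sg-2$. The key point is that a semistable rank-two bundle of negative degree has no nonzero section, since a section produces a sub-line-bundle of non-negative degree, contradicting the inequality $\deg(M)\le\deg(V)/2$ of Proposition~\ref{prop:bounds}. This degree is negative precisely when $\deg(V)\ge 4\sg-1$, which is where the bound comes from; the same computation with $\cO_C(p)$ omitted gives a bundle of degree $-\deg(V)+4\sg-4<0$ and hence $H^1(V)=0$.

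For the first assertion the criterion above would only yield $4\sg-1$, so instead I would pass to a sub-line-bundle $M\subset V$ of maximal degree and its invertible quotient $Q=V/M$. Nagata's inequality in Proposition~\ref{prop:bounds} gives $\deg(M)\ge(\deg(V)-\sg)/2\ge\sg+1$ once $\deg(V)\ge 3\sg+2$, while semistability gives $\deg(M)\le\deg(V)/2$, so $\deg(Q)=\deg(V)-\deg(M)\ge\deg(V)/2\ge\sg+1$ as well. For generic $L\in\Pic^0(C)$ both $M\otimes L$ and $Q\otimes L$ are generic line bundles of degree at least $\sg+1$, and I would invoke the standard fact that a generic line bundle of degree $d\ge\sg+1$ is globally generated with $H^1=0$; this follows from a short dimension count, since the $N\in\Pic^d(C)$ that fail are contained in the union of the locus $\{h^1(N)\neq 0\}$ and the set of $N$ with $h^0(\omega_C\otimes N^{-1}\otimes\cO_C(p))\neq0$ for some $p$, the latter being the image of $C\times\Sym^{2\sg-1-d}C$ in $\Pic^d(C)$ (empty when $d\ge 2\sg$), of dimension at most $2\sg-d<\sg$. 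Since $H^1(M\otimes L)=0$, the sequence $0\to M\otimes L\to V\otimes L\to Q\otimes L\to 0$ makes $H^0(V\otimes L)\to H^0(Q\otimes L)$ surjective, and an elementary lemma then shows that an extension of two globally generated line bundles with this surjectivity is itself globally generated; the vanishing $H^1(V\otimes L)=0$ follows from the sequence, or directly from the second bullet of Proposition~\ref{prop:bounds} since $\chi(V)>0$.

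The step I would treat most carefully is the passage from the two generic base-point-free line bundles to global generation of $V\otimes L$: one must verify the extension lemma pointwise, checking at each $p$ that sections of $M\otimes L$ fill the sub-line while sections of $V\otimes L$ lifting sections of $Q\otimes L$ fill the quotient, and one must ensure that the several genericity conditions on $L$—base-point-freeness of $M\otimes L$, of $Q\otimes L$, and non-speciality—can be imposed at once, which is immediate as each excludes only a proper closed subset of $\Pic^0(C)$. I would also record that the threshold $3\sg+2$ is exactly what makes Nagata's estimate force $\deg(M)\ge\sg+1$, which explains why the two statements carry different numerical hypotheses.
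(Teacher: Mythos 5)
Your proof is correct. For the first assertion it coincides with the paper's argument: both extract a maximal invertible subsheaf $M\subset V$, combine Nagata's bound with semistability (Proposition~\ref{prop:bounds}) to force $\deg(M),\deg(V/M)\ge \sg+1$, and twist by a generic $L$ so that both line bundles are base-point free with vanishing $H^1$; you merely make explicit the dimension count in $\Pic(C)$ and the pointwise extension lemma that the paper leaves implicit. For the second assertion your route is genuinely different. The paper argues by contradiction through length-one elementary transformations $0\to U\to V\to Q\to 0$: if $H^1(U)\neq 0$, Serre duality yields a section of $\omega_C\otimes U^{\vee}$, and saturating that section and dualizing the resulting extension produces a sub-line-bundle of $V$ whose degree violates semistability. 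You instead verify the pointwise criterion $H^1(V\otimes\cO_C(-p))=0$ for every $p$: Serre duality identifies this group with $H^0$ of the semistable rank-two bundle $V^{\vee}\otimes\omega_C\otimes\cO_C(p)$, whose degree $4\sg-2-\deg(V)$ is negative exactly when $\deg(V)\ge 4\sg-1$, and a semistable bundle of negative degree has no nonzero section. The two arguments are the same in spirit---each converts a nonzero $H^1$ via Serre duality into a destabilizing sub-line-bundle---but yours is shorter, makes the threshold $4\sg-1$ appear transparently, and yields the slightly stronger vanishing $H^1(V\otimes\cO_C(-p))=0$ for all $p$, which implies the paper's $H^1(U)=0$ for every such $U$ because $V\otimes\cO_C(-p)\subset U$ with torsion quotient. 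The paper's formulation, on the other hand, tests exactly the subsheaves that arise from failure of global generation, so it works with the minimal vanishing needed.
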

\begin{proof}
For the first assertion,
Proposition~\ref{prop:bounds} implies we may express $V$ as an extension of invertible
sheaves
$$0 \ra M \ra V \ra V/M \ra 0,$$
where $\deg(M)\ge \sg+1$ and $\deg(V/M)\ge \frac{3}{2}\sg+1$ 
if $\deg(V)\ge 3\sg +2$.  
For generic $L$, $M\otimes L$ and $(V/M) \otimes L$ are globally generated with
vanishing higher cohomology, so the same is true for $V\otimes L$.  

We are grateful to N.~Hoffmann for suggesting improvements on the bound
for the second assertion.  
To prove that $V$ is globally generated, it suffices to show that for each
expression
$$0 \ra U \ra V \ra Q \ra 0$$
with $Q$ a torsion sheaf of length one, we have $\Gamma(U) \subsetneq \Gamma(V)$.  
If we can show that $H^1(U)=0$, it will follow that $H^1(V)=0$ and 
$$\dim \Gamma(U)=\chi(U)=\chi(V)-1<\dim \Gamma(V).$$
However, if $H^1(U)$ were non-vanishing then $\Gamma(\omega_C \otimes U^{\vee})\neq 0$ by Serre
duality.  Let $N \subset \omega_C \otimes U^{\vee}$ denote the saturation of some non-vanishing
section, whence $\deg(N)\ge 0$;  consider the resulting extension
$$0 \ra N \ra \omega_C\otimes U^{\vee} \ra  \omega^2_C\otimes N^{-1} \otimes \det(U)^{-1} \ra 0.$$
On dualizing, we obtain
$$0 \ra N \otimes \det(U) \otimes \omega_C^{-1} \ra U \ra N^{-1}\otimes \omega_C \ra 0.$$
We may regard the first term as a subsheaf of $V$, so semistability implies
$$\deg(N \otimes \det(U) \otimes \omega_C^{-1}) \leq \deg(V)/2$$
and 
$$\deg(N)+\deg(V)-1-(2\sg-2)  \leq \deg(V)/2.$$
Thus we conclude
$$\deg(N) \leq \frac{-\deg(V)}{2}+2\sg-1 < 0,$$
a contradiction.
\end{proof}

\section{Projective bundles over limits of hyperelliptic curves}
\label{sect:projbundle}

The purpose of this and the subsequent section is to analyze 
how sections of quadric surface fibrations specialize as the 
base of the fibration degenerates to a nodal curve.  Essentially,
the excellent {\em a priori} control we have for sections of quadric
surface fibrations gives structure to how sections `break' as the
fibration breaks into a union of two fibrations of smaller height.  
We carry out this
analysis with a view toward understanding the behavior of sections
of del Pezzo fibrations of smaller degree over $\bP^1$. 

\subsection*{Sections of projective bundles over nodal curves}
Let $C$ be a nodal projective curve of arithmetic genus $\sg$
over an algebraically closed field.
\begin{lemm} \label{lemm:projbund}
If $P\ra C$ is a projective bundle then there exists a vector bundle
$V$ over $C$ such that $P=\bP(V)$.  Sections $t:C \ra P$ 
with $t^*\cO_{\bP(V)}(1)\simeq L$ correspond to 
short exact sequences
$$0 \ra N \ra V^{\vee} \ra L \ra 0,$$
or equivalently, elements of 
$$\bP(\Gamma(C,V\otimes L))$$
such that the induced $V^{\vee} \ra L$ is surjective.  
\end{lemm}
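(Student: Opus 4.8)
The plan is to prove the statement in three parts, treating the existence of $V$, the correspondence with short exact sequences, and the projective-space description in turn. The genuinely new content is the first assertion, since the other two are standard for projective bundles over any base once $P = \bP(V)$ has been fixed; the subtlety is that $C$ is only a nodal curve, not smooth, so I cannot simply invoke triviality of a Brauer group for smooth curves.

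\emph{Existence of $V$.} First I would argue that every projective bundle $P \to C$ is the projectivization of a vector bundle. A $\bP^{r}$-bundle is a $\PGL_{r+1}$-torsor, and the obstruction to lifting it to a $\GL_{r+1}$-torsor (equivalently, to writing $P = \bP(V)$) lies in the cohomological Brauer group $H^{2}_{\mathrm{et}}(C,\bG_m)_{\mathrm{tors}}$, via the exact sequence $H^{1}(C,\GL_{r+1}) \to H^{1}(C,\PGL_{r+1}) \to H^{2}(C,\bG_m)$. So it suffices to show $\mathrm{Br}(C) = 0$ for a nodal projective curve $C$ over an algebraically closed field $k$. For a \emph{smooth} projective curve over an algebraically closed field this is classical (Tsen's theorem: the function field has cohomological dimension one, so its Brauer group vanishes, and a class on $C$ injects into the generic Brauer group). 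For the nodal case I would pass to the normalization $\nu : \tilde{C} \to C$: each component of $\tilde C$ is a smooth projective curve with trivial Brauer group, and I would then control $\mathrm{Br}(C)$ in terms of $\mathrm{Br}(\tilde C)$ and the gluing data at the nodes, using the fact that Brauer classes are detected on dense opens and that a $\PGL$-torsor can be descended/glued along the finitely many nodal points. Concretely, a projective bundle pulled back to the normalization is a projectivized vector bundle on each smooth component; the remaining task is to glue these vector bundles across the nodes, and since the gluing at a node is an isomorphism of fibers that can always be lifted from $\PGL$ to $\GL$ (the obstruction being a unit, and $k$ algebraically closed so every unit is a square etc.), the bundles glue to a genuine $V$ on $C$. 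This gluing step is where I expect the main obstacle to lie: one must check that the $\PGL$-gluing isomorphisms at the nodes lift compatibly to $\GL$ without introducing a global obstruction, which is exactly the statement that $\mathrm{Br}(C)=0$; making this precise for a reducible nodal curve (rather than an irreducible one) requires some care with how the dual graph of $C$ enters.

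\emph{Correspondence with short exact sequences.} Once $P = \bP(V)$ is fixed, a section $t : C \to \bP(V)$ is by definition a rank-one locally free quotient of $V^{\vee}$ (using the convention that points of $\bP(V)$ are one-dimensional quotients of the fiber of $V^{\vee}$, consistent with the sequence written in Lemma~\ref{lemm:projbund} and with the computation $\Hom(N,V^{\vee}) = \cdots = \tau^{*}\cO_{\bP(V)}(1)\otimes V$ carried out in Section~\ref{sect:reduction}). Such a quotient is precisely a surjection $V^{\vee} \twoheadrightarrow L$, and its kernel $N$ is locally free of rank $r$; this gives the short exact sequence $0 \to N \to V^{\vee} \to L \to 0$, and the pullback of the relative $\cO(1)$ is exactly the quotient line bundle $L$. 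Conversely any such surjection determines a section, and isomorphic quotients give the same section, so the correspondence is a bijection; I would state this and note it is the relative version of the universal property of $\bP(V)$, valid over an arbitrary base and hence unaffected by the nodes of $C$.

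\emph{The projective-space description.} Finally, a surjection $V^{\vee} \to L$ is the same as a nonzero map up to scalar, i.e.\ an element of $\Hom(V^{\vee}, L) = \Gamma(C, V \otimes L)$ modulo scaling, which is a point of $\bP(\Gamma(C, V\otimes L))$; the surjectivity condition carves out the locus where the induced $V^{\vee} \to L$ has no cokernel, i.e.\ does not drop rank at any point of $C$. Here I would simply identify $\Hom(V^{\vee},L)$ with $\Gamma(C, (V^{\vee})^{\vee}\otimes L) = \Gamma(C, V\otimes L)$ and observe that the non-surjective maps form the closed subset excluded in the statement, matching the description of boundary points already given in Section~\ref{sect:reduction}. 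With the existence of $V$ in hand, these last two paragraphs are essentially formal, so I would keep them brief and concentrate the argument on the triviality of $\mathrm{Br}(C)$ for nodal $C$.
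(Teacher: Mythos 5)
Your proposal is correct and follows essentially the same route as the paper: the paper's entire proof of the first assertion is the Brauer-obstruction argument for lifting a $\PGL_r$-cocycle to $\GL_r$, together with the bare assertion that the Brauer group "is trivial on a curve," and the remaining assertions are dismissed as the standard characterization of maps to projective space, just as in your second and third paragraphs. Your additional work on the nodal case --- passing to the normalization and gluing at the nodes, where the projective-linear identifications of fibers lift to linear ones because $\GL_r(k)\ra\PGL_r(k)$ is surjective over a field, with no further cocycle obstruction since the gluings at distinct nodes are independent --- correctly fills in the one point the paper leaves implicit.
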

\begin{proof}
The obstruction to lifting a cocycle in $\PGL_{r}$
to $\GL_r$ lies in the Brauer group, which is trivial on a 
curve.  This gives the first assertion.
The second assertion is the standard characterization of
morphisms into projective space.   
\end{proof}

Let $e$ denote an integer-valued function from the set of irreducible
components of $C$ and $|e|$ the sum of this function over these components.
Consider the irreducible component of the Hilbert scheme
$\Sect(P/C,e)$ containing the sections
$$\{\tau:C\ra P: \deg(\tau^*\cO_{\bP(V)}(1))=e \}.$$
We have a rational map
$$
\begin{array}{rcl}
\alpha_e: \Sect(P/C,e) & \dashrightarrow & \Pic^e(C) \\
      \tau      & \mapsto & \tau^*(\cO_{\bP(V)}(1)).
\end{array}
$$
We are interested in those $e$ such that $\alpha_e$ is dominant.
By Lemma~\ref{lemm:projbund},
these include all $e$ such that, for generic $L \in \Pic^e(C)$,
we have
\begin{equation} \label{eqn:condition}
\Gamma(V\otimes I \otimes L)\subsetneq \Gamma(V\otimes L)
\end{equation}
for each ideal sheaf $I \subsetneq \cO_C$.  
Indeed, if $V^{\vee} \ra L$ fails to be surjective then its image
is isomorphic to $L\otimes I$ for some ideal sheaf $I$.

\subsection*{Applications to degenerate quadric fibrations}

Here, a {\em degenerate quadric surface fibration} consists of
\begin{itemize}
\item{a connected nodal curve 
$$B:=B_1\cup_p B_2$$
with a single node $p$;}
\item{a flat morphism from a projective scheme 
$$\pi:\cX \ra B,$$ 
such that the restrictions
$$\pi_j:=\pi|\cX_j=\cX\times_B B_j \ra B_j, \quad  j=1,2,$$
are quadric surface fibrations smooth over $p$ with square-free 
discriminant elsewhere.}
\end{itemize}
Let $g:C\ra B$ denote the discriminant curve;  note that
$$C=C_1 \cup_{q,r}C_2, \quad g(q,r)=p$$
where $g|C_i: C_i \ra B_i$ is a double cover.  
The Fano variety $\cF$ of lines on $\cX$ remains a $\bP^1$-bundle
over $C$.  We can express $\cF=\bP(V)$, where $V$ is a rank-two
vector bundle on $C$ by Lemma~\ref{lemm:projbund}.  The argument
of Section~\ref{sect:reduction} still yields a natural identification between
$\Sect(\cX/B)$ and $\Sect(\cF/C)$.
As before, we define
$$\epsilon(\pi)\equiv \deg(V) \pmod{2},$$
so that $\epsilon(\pi)\equiv \epsilon(\pi_1)+\epsilon(\pi_2)$.  
This is invariant under deformations of $\pi$, including smoothings
to quadric surface fibrations over smooth curves.

\begin{prop} \label{prop:new}
Let $\pi:\cX \ra B$ be a degenerate quadric surface fibration as described above,
over a curve of genus zero.  Assume that the discriminant curve $C=C_1 \cup C_2 \ra B$
has genus $\sg$ and admits a component $C_1 \simeq \bP^1$.  Set 
$$h=\begin{cases}   \sg - 1  & \text{if }\epsilon(\pi) \equiv 0\pmod{2} \\
		   \sg - 2  & \text{if }\epsilon(\pi) \equiv 1\pmod{2}
	\end{cases}
$$
and consider
$$\gamma_h:\Sect(\cX/B,h) \ra \Pic(C).$$
\begin{enumerate}
\item{
If $\epsilon(\pi_1)\equiv \epsilon(\pi_2)\equiv 0$ then $\gamma_h$ dominates two components of the Picard
variety, over which the generic fiber $\simeq \bP^1$.}
\item{
If $\epsilon(\pi_1)\equiv \epsilon(\pi_2)\equiv 1$ then $\gamma_h$ dominates three components of the Picard
variety, over which the generic fiber $\simeq \bP^1$.}
\item{
If $\epsilon(\pi_1)\not \equiv \epsilon(\pi_2)$ then $\gamma_h$ dominates two components of the Picard
variety, over which it is birational.}
\end{enumerate}
\end{prop}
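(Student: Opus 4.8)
The plan is to transport the problem to the ruled surface $\cF=\bP(V)\ra C$ and to study sections componentwise over $C=C_1\cup_{q,r}C_2$. Since $C_1\simeq\bP^1$ forces $g(C_2)=\sg-1$, and the dual graph of $C$ has a single loop, the connected components of $\Pic(C)$ are indexed by the bidegree $(d_1,d_2)=(\deg L|_{C_1},\deg L|_{C_2})$, and each $\Pic^{(d_1,d_2)}(C)$ is a $\bG_m$-extension of the degree-$d_2$ component of $\Pic(C_2)$, hence of dimension $\sg$. A section $\tau:C\ra\cF$ restricts to $\tau_i:C_i\ra\bP(V|_{C_i})$ agreeing in the fibres over the two nodes, so $\gamma_h$ breaks up according to bidegree; the two things to pin down are which bidegrees are dominated, and the generic fibre over each.

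First I would run Riemann--Roch in the twist-invariant variables $m_i:=2\deg(L|_{C_i})+\deg(V|_{C_i})$. Writing $V_i=V|_{C_i}$, $L_i=L|_{C_i}$, one finds $\chi(V_1\otimes L_1)=m_1+2$ on $C_1$ and $\chi(V_2\otimes L_2)=m_2-2\sg+4$ on $C_2$. Extending the identification of Section~\ref{sect:reduction} to the nodal base (componentwise, since $\pi$ is smooth over the node) yields the height relation $h=m_1+m_2-\sg-1$, so the prescribed $h$ forces $m_1+m_2=2\sg$ if $\epsilon(\pi)\equiv0$ and $m_1+m_2=2\sg-1$ if $\epsilon(\pi)\equiv1$. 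Feeding this into the normalization sequence
$$0\ra V\otimes L\ra \nu_*\nu^*(V\otimes L)\ra (V\otimes L)_q\oplus(V\otimes L)_r\ra 0,$$
with $\nu$ the normalization, gives $\chi(C,V\otimes L)=(m_1+2)+(m_2-2\sg+4)-4$, equal to $2$ when $\epsilon(\pi)\equiv0$ and to $1$ when $\epsilon(\pi)\equiv1$, independently of the bidegree. Granting the vanishing discussed below, the generic fibre of $\gamma_h$ is then $\bP(\Gamma(V\otimes L))\simeq\bP^1$ in the cases $\epsilon(\pi)\equiv0$ (parts (1) and (2)) and a single point in the case $\epsilon(\pi)\equiv1$ (part (3)), so that there $\gamma_h$ is birational onto the components it dominates; this already produces the claimed trichotomy of fibres.

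To count the dominated components, note that an honest section restricts to a nonzero section on each $C_i$, so a necessary condition is $h^0(V_i\otimes L_i)\geq1$ for generic $L$. On $C_1$ the balanced splitting gives $h^0(V_1\otimes L_1)=\max(0,m_1+2)$, and on $C_2$ the semistability of $V_2$ together with Raynaud's genericity statement (Proposition~\ref{prop:bounds}) gives $h^0(V_2\otimes L_2)=\max(0,m_2-2\sg+4)$ for generic $L_2$. Both are positive exactly when $m_1\geq-1$ and $m_2\geq2\sg-3$; intersecting with $m_1+m_2\in\{2\sg,2\sg-1\}$ and the congruences $m_i\equiv\epsilon(\pi_i)\pmod2$ leaves $m_1\in\{0,2\}$ (two components) when $\epsilon(\pi_1)\equiv\epsilon(\pi_2)\equiv0$, $m_1\in\{-1,1,3\}$ (three components) when $\epsilon(\pi_1)\equiv\epsilon(\pi_2)\equiv1$, and exactly two admissible values when $\epsilon(\pi_1)\not\equiv\epsilon(\pi_2)$. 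This reproduces the counts $2$, $3$, $2$.

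The main obstacle is the positivity needed to make the fibre computation exact. The first point is $h^1(C,V\otimes L)=0$ for generic $L$ in each dominated component: by the sequence above this is the vanishing of $H^1$ on each $C_i$ (clear on $C_1$, since the twisted summands have degree $\geq-1$ in the listed ranges, and on $C_2$ from the Raynaud count) together with surjectivity of the combined evaluation $\Gamma(V_1\otimes L_1)\oplus\Gamma(V_2\otimes L_2)\ra(V\otimes L)_q\oplus(V\otimes L)_r$; in each dominated bidegree one of the two factors is already globally generated at the two nodes, except in the balanced middle bidegree, where the two images must be put in general position for generic gluing. The second and more delicate point is that the nowhere-vanishing (honest) sections form a dense subset of $\bP(\Gamma(V\otimes L))$, so that $\gamma_h$ genuinely dominates with the stated fibre and does not miss a boundary divisor. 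Because $m_2\approx2\sg$ is too small for $V_2\otimes L_2$ to be globally generated in general (so Corollary~\ref{coro:gg} does not apply directly), I expect to control the locus of sections vanishing somewhere on $C_2$ by a dimension count resting on the Nagata bound for maximal sub-line-bundles (Proposition~\ref{prop:bounds}), showing it to be a proper subvariety of the space of sections. This boundary estimate on $C_2$ is the step I anticipate requiring the most care.
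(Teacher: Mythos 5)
Your proposal is correct and is essentially the paper's own argument: after the same normalization (your constraint $m_1+m_2=2\sg$ or $2\sg-1$ matches the paper's choice $\deg(V)=2\sg$ resp.\ $2\sg-1$, giving $\chi(V\otimes L)=2$ resp.\ $1$), the paper likewise enumerates admissible bidegrees componentwise---its listed cases are exactly your $m_1\in\{0,2\}$, $m_1\in\{-1,1,3\}$, and $m_1\in\{0,2\}$ or $\{-1,1\}$---and identifies the generic fiber of $\gamma_h$ with $\bP(\Gamma(V\otimes L))$, of dimension $1$ or $0$. The only differences are presentational: the paper gets the section counts by gluing explicit spaces $\Gamma(V\otimes L|C_1)$ and $\Gamma(V\otimes L|C_2)$ at the two nodes (using balancedness on $C_1$ and Raynaud's genericity on $C_2$, just as you do) rather than via $\chi$ plus $h^1$-vanishing, and the density-of-honest-sections issue you flag as delicate is absorbed implicitly into the framework of Section~\ref{sect:projbundle} (condition~(\ref{eqn:condition}) and the broken-section boundary) rather than argued separately.
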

\begin{proof}
In light of the analysis in Section~\ref{sect:casebycase}, we normalize
$$\deg(V)=\begin{cases} 2\sg & \epsilon(\pi)\equiv 0\pmod{2} \\
		        2\sg-1 &\epsilon(\pi)\equiv 1\pmod{2}
	\end{cases}
$$
which means that $\chi(V)=2$ in the even case and $1$ in the odd case.
Note that a generic vector bundle of this degree on a 
smooth projective curve of genus $\sg$ has no higher cohomology
(see Proposition~\ref{prop:bounds}).  

\subsection*{Odd case}
Our first subcase is
$$\deg(V|C_1)\equiv 0 \pmod{2}, \quad 
\deg(V|C_2)\equiv 1 \pmod{2}.$$
The possibilities compatible with (\ref{eqn:condition}) are:
\begin{itemize}
\item{$\deg(V\otimes L|C_1)=2, \quad \deg(V\otimes L|C_2)=2\sg-3$}
\item{$\deg(V\otimes L|C_1)=0, \quad \deg(V\otimes L|C_2)=2\sg-1$}
\end{itemize}
In the first case, $V\otimes L|C_1 \simeq \cO_{\bP^1}(1)^{\oplus 2}$
and $\Gamma(V\otimes L|C_2)$ admits a unique non-zero section, up to
scalar.  On gluing we see that $V\otimes L$ admits a unique section
as well.  Otherwise, $V\otimes L|C_1 \simeq \cO_{\bP^1}^{\oplus 2}$ 
which is globally generated by two sections.  Since $V\otimes L|C_2$
has a three-dimensional space of sections, after gluing we have a unique
section up to scalar.  

The other subcase is
$$\deg(V|C_1)\equiv 1 \pmod{2}, \quad 
\deg(V|C_2)\equiv 0 \pmod{2},$$
which leads to the possibilities:
\begin{itemize}
\item{$\deg(V\otimes L|C_1)=1, \quad \deg(V\otimes L|C_2)=2\sg-2$}
\item{$\deg(V\otimes L|C_1)=-1, \quad \deg(V\otimes L|C_2)=2\sg$}
\end{itemize}
In the first instance, $V\otimes L|C_1 \simeq \cO_{\bP^1} \oplus \cO_{\bP^1}(1)$
which admits three sections,
and $V\otimes L|C_2$ generally has a two-dimensional space of sections.  
In the second instance, $V\otimes L|C_1 \simeq \cO_{\bP^1} \oplus \cO_{\bP^1}(-1)$
which admits a single section and $V\otimes L|C_2$ generally has a four-dimensional
space of sections.  
In both instances, we have a unique section up to scalar.  

\subsection*{Even case}
Our first subcase is
$$\deg(V|C_1)\equiv \deg(V|C_2) \equiv 0\pmod{2}.$$
The possibilities consistent with (\ref{eqn:condition}) are limited to:
\begin{itemize}
\item{$\deg(V\otimes L|C_1)=2, \quad \deg(V\otimes L|C_2)=2(\sg-1)$}
\item{$\deg(V\otimes L|C_1)=0, \quad \deg(V\otimes L|C_2)=2\sg$.}
\end{itemize}
In the former case, we have $V\otimes L|C_1 \simeq \cO_{\bP^1}(1)^{\oplus 2}$,
which is globally generated with four sections.
For generic $L$, we find that $V\otimes L|C_2$ 
admits a two-dimensional space of sections.
Overall, we find that $V\otimes L$ has a two-dimensional
space of sections.  
In the latter case, we have $V\otimes L|C_1 \simeq \cO_{\bP^1}^{\oplus 2}$
which is globally generated with two sections.  For generic $L$,
$V\otimes L|C_2$ admits a four-dimensional space of sections.  
After gluing we find that $\Gamma(V\otimes L)$ is two-dimensional.  

Our second subcase is
\begin{equation}
\deg(V|C_1)\equiv \deg(V|C_2) \equiv 1\pmod{2}. \label{badcase}
\end{equation}
Condition (\ref{eqn:condition}) allows the following {\em three} possibilities:
\begin{itemize}
\item{$\deg(V\otimes L|C_1)=1, \quad \deg(V\otimes L|C_2)=2\sg -1$}
\item{$\deg(V\otimes L|C_1)=-1, \quad \deg(V\otimes L|C_2)=2\sg + 1$}
\item{$\deg(V\otimes L|C_1)=3, \quad \deg(V\otimes L|C_2)=2\sg - 3$}
\end{itemize}
Again, in each case we find that $\Gamma(V\otimes L)$ is 
two-dimensional.  
\end{proof}

\section{Limits of sections and N\'eron models of intermediate Jacobians}
\label{sect:Neron}

We retain the notation of Section~\ref{sect:projbundle}.  Let
$$\bD=\{t\in \bC: 0<t<1 \}$$ 
denote a complex disc,
$\cX(t)$ a family of quadric surface fibrations
specializing to $\cX=\cX(0)$ over $\bD$,
and $\cC(t)$ the corresponding family of discriminant curves specializing
to $C$.  
Note that this family is {\em not} stable, as the component
$C_1\simeq \bP^1 \subset C$ must be contracted in a stable reduction.  

The intermediate Jacobian $\IJJ(\cX(t))$ is isomorphic
to the Jacobian $\JJ(\cC(t))$ of the discriminant curve $\cC(t)$.  
Here we compute the special
fiber $\tilde{\JJ}_e(0)$ of the N\'eron model
$$\tilde{\JJ}_e \ra \bD$$
of the intermediate Jacobians $\IJJ(\cX(t))$, following
the exposition of \cite[pp.313-314]{GGK}, which draws on previous work of
I. Nakamura \cite{Nakamura}.  

There is a basis for the homology of $\cC(t)$ such that the
monodromy matrix takes the form
$$\left( \begin{matrix}
	I_{2\sg-2}  &  0 \\
 	    0     &  T
\end{matrix} \right), \quad T=\left( \begin{matrix} 1 & 2 \\
						    0 & 1 \end{matrix} \right),$$
where $I_{2\sg-2}$ is the identity matrix of the indicated size.  The logarithm
of this matrix takes the form
$$\left( \begin{matrix}
	0  &  0 \\
 	    0     & N 
\end{matrix} \right), \quad N=\left( \begin{matrix} 0 & 2 \\
						    0 & 0 \end{matrix} \right).$$
Applying formula II.C.1 of \cite{GGK}, we obtain an exact sequence
$$0 \ra \JJ(C) \ra \tilde{\JJ}_e(0) \ra G \ra 0,$$
where $G=\bZ/2\bZ$ is the group of connected components.  
Note that $\JJ(C)$ is itself an extension
$$0 \ra \bG_m \ra \JJ(C) \ra \JJ(C_2) \ra 0.$$

This extension is important because it is the target of the cycle
class map for {\em limits} of one-cycles homologous to zero.
Let $\cX(t)$ be a family of quadric fibrations with $\cX\simeq \cX(0)$ and
$\cX(t)$ non-singular for $t\neq 0$.  Let $Z^2(\cX(t))$ denote the 
codimension-two cycles of $\cX(t)$ homologous to zero, e.g., differences of two
sections of $\pi(t):\cX(t) \ra \bP^1$.  Let $\cZ(t)$ denote a family of
such cycles in $\cX(t)$, with $\cZ(t)$ homologous to zero for $t\neq 0$.  
Note however that $\cZ(0)$ need not be homologous to zero, e.g., when it
is a difference of two sections of 
$$\pi:\cX \ra B=B_1\cup_p B_2 $$ 
whose heights are equal but are allocated differently between the components
of $\cX$.  Nevertheless, the Abel-Jacobi images of the $\cZ(t)$ yield
a section of 
$$\tilde{\JJ}_e \ra \bD$$
and thus an element $\gamma(\cZ(0))\in \tilde{\JJ}_e(0)$.  

This is visibly consistent with the description in Section~\ref{sect:projbundle},
except in the case (\ref{badcase}) where there are {\em three} kinds of 
sections of the projective bundle but only {\em two} components of the N\'eron model.
We explain the geometry of the induced mapping
$$\gamma:\Sect(\cX/B,h) \ra \tilde{\JJ}_e(0),$$
where the height is chosen so that the sections correspond
to elements of $\Gamma(V\otimes L)$ with
$\deg(V\otimes L)=2\sg$.  

\begin{prop}
Recall the notation and assertions of Proposition~\ref{prop:new}.  
For assertions 1 and 3, the two components of $\Pic(C)$ dominated by $\gamma_h$ 
correspond to the two components of $\tilde{\JJ}_e(0)$.  
For assertion 2, sections corresponding to the cases
$$\deg(V\otimes L|C_1)=-1, \quad \deg(V\otimes L|C_2)=2\sg + 1$$
and 
$$\deg(V\otimes L|C_1)=3, \quad \deg(V\otimes L|C_2)=2\sg - 3$$
are mapped onto the same connected component of $\tilde{\JJ}_e(0)$.  
\end{prop}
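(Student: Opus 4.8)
The plan is to compute the discrete invariant that records the component of $\tilde{\JJ}_e(0)$ and to match it against the multidegree $e_1:=\deg(L|C_1)$ of the line bundle $L=\tau^{*}\cO_{\bP(V)}(1)$ attached to a section $\tau$. Within $\Sect(\cX/B,h)$ for the fixed $h$ of the statement the total degree $\deg(L)$ is fixed, so the component of $\Pic(C)$ carrying $\tau$ is recorded by $e_1$ alone, and since $\deg(V\otimes L|C_1)=d_1+2e_1$ with $d_1:=\deg(V|C_1)$, the case lists of Proposition~\ref{prop:new} translate directly into values of $e_1$. First I would record these translations. In assertions 1 and 3 the two dominated components have values of $e_1$ differing by one, hence of opposite parity. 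In assertion 2, where $d_1$ is odd, the three cases $\deg(V\otimes L|C_1)\in\{1,-1,3\}$ yield three values of $e_1$ of which the two coming from $\deg(V\otimes L|C_1)=-1$ and $\deg(V\otimes L|C_1)=3$ differ by two and so share a parity, while the value coming from $\deg(V\otimes L|C_1)=1$ has the opposite parity.

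Next I would identify the map $e_1\mapsto G$. The total space $\cC$ of the degenerating family $\cC(t)$ is a regular surface whose special fiber is the reduced curve $C=C_1+C_2$; since $C_1\simeq\bP^1$ meets $C_2$ in the two nodes $q,r$ and $C$ is numerically a fiber, we have $C_1\cdot C_2=2$ and, from $C_1\cdot(C_1+C_2)=0$, $C_1^2=C_2^2=-2$. The line bundle $\cO_{\cC}(C_1)$ is trivial on every smooth fiber $\cC(t)$ with $t\neq 0$, so the section of $\JJ(\cC(t))\to\bD^{*}$ it defines is the zero section; by the N\'eron mapping property its value at $0$ lies in the identity component of $\tilde{\JJ}_e(0)$. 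On the other hand $\cO_{\cC}(C_1)|_C$ has multidegree $(C_1^2,C_1\cdot C_2)=(-2,2)$. Hence a relative line bundle of multidegree $(-2,2)$ is N\'eron-trivial, the class of $\tau$ in $\tilde{\JJ}_e(0)$ depends only on $e_1\bmod 2$, and the induced homomorphism $\bZ(1,-1)/\bZ(2,-2)\to G$ is the isomorphism onto $G=\bZ/2\bZ$ furnished by the exact sequence $0\to\JJ(C)\to\tilde{\JJ}_e(0)\to G\to 0$ recalled above. This is consistent with the monodromy computation: the multidegree shift $(1,-1)$ is the vanishing cycle $a$, and $N(b)=2a$ records $\mathrm{Im}(N)=\bZ\cdot 2a$, so that $G=\langle a\rangle/\langle 2a\rangle$.

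With the map $e_1\mapsto e_1\bmod 2$ in hand the conclusion is immediate. For assertions 1 and 3 the two dominated components of $\Pic(C)$ have $e_1$ of opposite parity, so they are carried bijectively onto the two components of $\tilde{\JJ}_e(0)$. For assertion 2 the cases $\deg(V\otimes L|C_1)=-1$ and $\deg(V\otimes L|C_1)=3$ have values of $e_1$ of equal parity and therefore land on the same component, while the case $\deg(V\otimes L|C_1)=1$ lands on the other; this is precisely the asserted collapse.

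I expect the main obstacle to be the second step: justifying that the multidegree class of a limiting family of cycles computes the N\'eron component, i.e. that the algebraic invariant $e_1\bmod 2$ coincides with the Hodge-theoretic component produced by the cited formula. The cleanest route is to fix one reference section $\tau'$, say in the $e_1=0$ component, and for each other section $\tau$ spread $L\otimes(L')^{-1}$ out to a relative line bundle over $\bD$; the limit Abel-Jacobi class $\gamma(\cZ(0))$ is then read off from its multidegree on $C$ modulo the N\'eron-trivial direction $(-2,2)$ computed above. One must also confirm that the given family may be taken with $\cC$ regular and $C$ reduced so that these intersection numbers apply, which is automatic for a standard smoothing $xy=t$ of each of the two nodes.
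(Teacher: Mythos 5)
Your proposal is correct, and its crux is exactly the paper's own argument: the collapse in assertion 2 is obtained by twisting by the class of a component of the special fiber regarded as a Cartier divisor on the regular total space ($\cO_{\cC}(C_1)$ for you, $\cO(C_2)$ in the paper), which is trivial on the nearby fibers $\cC(t)$ and therefore shifts the multidegree of $L$ by $(\mp 2,\pm 2)$ --- hence that of $V\otimes L$ by $(\mp 4,\pm 4)$ --- without moving the limit point in $\tilde{\JJ}_e(0)$. The only difference is one of packaging: the paper declares that assertions 1 and 3 need no proof, while you additionally organize everything through the parity invariant $e_1\bmod 2$ and the identification $\bZ(1,-1)/\bZ(2,-2)\simeq G$, a compatibility (Raynaud's algebraic component group versus the Hodge-theoretic one of Green--Griffiths--Kerr) that you correctly flag as the point needing justification and that the paper likewise uses implicitly.
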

\begin{proof}
Only the last statement requires proof.  
Indeed, the fact that these are related by an involution can be seen 
by tensoring $L$ in the first case by $\cO(C_2)|C_2$, where
$C_2$ is regarded as a Cartier divisor on the total space of $C_t$.  
This increases the degree of the vector bundle on $C_1$ (and
decreases the degree on $C_2$) by four.  
\end{proof}

The functorial properties of N\'eron-models allow us to compactify
$$\tilde{\JJ}_e(0)\subset \overline{\JJ}_e(0),$$
where the latter is a $\sg$-dimensional toroidal compactification
over the $(\sg-1)$-dimensional abelian variety $\JJ(C_2)$.  Thus its fibers consist of
pairs of $\bP^1$'s meeting in two nodes.  The fibers of
$$\oSect(\cX/B,h) \dashrightarrow \overline{\JJ}_e(0)\ra \JJ(C_2),$$
from the irreducible component of the Hilbert scheme compactifying
$\Sect(\cX/B,h)$, do have three components corresponding to the
three cases of (\ref{badcase}).  

\begin{rema}
The analysis of limiting intermediate Jacobians here is also reminiscent of
Caporaso's compactification of the relative Picard scheme
over the moduli space of stable curves \cite{Caporaso}, especially Section 7.3 
which addresses curves with two components.  
The two cases of (\ref{badcase}) identified to the same component
of $\tilde{\JJ}_e(0)$ correspond to strictly semistable line bundles
that are identified by the Geometric Invariant Theory.  
\end{rema}

\begin{rema}
An alternative approach to limits of intermediate Jacobians via log geometry can be found
in \cite{KNU}.
\end{rema}

\section{Stability and discriminant curves}
\label{sect:SDC}

Let $\pi:\cX \ra B$ be a quadric surface fibration with square-free discriminant.
Let $\cF \ra C \stackrel{g}{\ra} B$ be the Fano variety of lines, realized as a $\bP^1$-bundle
over the discriminant, and $\iota:C\ra C$ the covering involution over $B$.  
Consider a section $\sigma:B\ra \cX$ of $\pi$ and the corresponding
section $\tau:C \ra \cF$ described in Section~\ref{sect:reduction}. 

If $\pi$ is smooth then the 
restriction of scalars
$\varpi:\Res_{C/B}(\cF) \ra B$
is isomorphic to $\cX \ra B$; this yields an isomorphism of normal bundles
$$g_*N_{\tau}=N_{\sigma}.$$
Furthermore, expressing $\cF$ as the projectivization $\bP(V)$ of a vector bundle on $C$,
we can write
$$g^*((\pi_*H)^{\vee})\simeq V \otimes \iota^*V$$
for a suitable normalization of $H$.  This reflects the fact that $\cX \hookrightarrow \bP((\pi_*H)^{\vee})$
is the Segre embedding of $\Res_{C/B}(\cF)$.

We extend these formulas to quadric surface fibrations with square-free discriminant, with a view toward
comparing various 
notions of stability and 
applying the results of Section~\ref{sect:SBC}.
As in Section~\ref{sect:reduction},
there exists a vector bundle $W$ on $C$ such that
\begin{itemize}
\item{
$W^{\vee} \simeq V\otimes \iota^*V$, which yields an 
involution 
$$\begin{array}{rcl}
W^{\vee}&\stackrel{i}{\ra} & W^{\vee} \\
\downarrow &               & \downarrow \\
   C       & \stackrel{\iota}{\ra}    & C
\end{array}
$$
given by $i(v_c\otimes v'_{\iota(c)})=v'_{\iota(c)}\otimes v_{c}$.}
\item{$W$ arises as an extension (see \ref{eqn:ET}))
$$0 \ra W \ra g^*\pi_*H \ra Q \ra 0,$$
where $Q$ is a skyscraper sheaf supported at the singularities
of the fibers of $\cX\times_B C \ra C$.}
\end{itemize}
The extension above dualizes to 
$$0 \ra g^*(\pi_*H)^{\vee} \ra W^{\vee} \ra Q' \ra 0,$$
where $Q'$ is also a skyscraper sheaf supported where the fibration fails
to be smooth.

Now $V$ is stable provided $W^{\vee}$ is stable, or even if 
$(W^{\vee},i)$ is stable as a bundle with involution, i.e., we only test
against quotient bundles compatible with the action of $i$.
Bhosle \cite[\S 1]{BhosleMA} shows that this is equivalent to the following 
form of {\em parabolic semistability} for $(\pi_*H)^{\vee}$:  For every 
isotropic subbundle 
$F \subset (\pi_*H)^{\vee}$  we have
$$\frac{\deg(\pi_*H)^{\vee}+\frac{1}{2}\Delta}{4} \ge 
\frac{\deg(F)+\frac{1}{2}\#\{x_j \in \bP(F)\}}{r}$$
where $r=\rank(F)$.  
For stability, we impose strict inequality. 
(There appears to be a notational inconsistency between
Definition 1.1 and Proposition 1.2 of \cite{BhosleMA};  here we rely on the proof
of the proposition.)

If the covering $C\ra B$ is non-trivial then $\cF \ra B$ has no sections,
i.e., there are no isotropic subbundles or rank two.
Thus isotropic subbundles $F\neq 0$ have rank one, so 
$\bP(F)=\sigma(B)$ for some section $\sigma:B\ra \cX$.  And if $\pi:\cX \ra B$
has square-free discriminant, sections avoid singularities in the fibers.  
Then the condition takes the form
\begin{equation} \label{eqn:stability}
\deg((\pi_*H)^{\vee})+\frac{1}{2}\Delta \ge
4\deg(F).
\end{equation}

We can express this in terms of the heights of the fibrations,
using the formulas of Section~\ref{sect:QFDH}.  
We may normalize $(\pi_*H)^{\vee}=E$
where 
$$
\Delta = -2\deg(E)+4\deg(I), 
\quad \deg(I)= \begin{cases} 0 & \text{ if } \epsilon(\pi)\equiv 0\pmod{2} \\
                            1 & \text{ if } \epsilon(\pi)\equiv 1\pmod{2}
		\end{cases}.
$$
Let $\psi:\bP(E) \ra B$ be the structure map and
$\xi=c_1(\cO_{\bP(E)}(1))$ the relative hyperplane class;
it follows that
$$c_1(T_{\bP(E)/B})=c_1(E)+4\xi.$$
The quadratic form defining $\cX$ is a symmetric homomorphism
$q:E \ra E^{\vee}\otimes I$, so 
$$[\cX]=2\xi+c_1(I).$$
The standard exact sequence
$$0 \ra T_{\cX/B} \ra T_{\bP(E)/B}|\cX \ra N_{\cX/\bP(E)} \ra 0$$
implies
$$c_1(T_{\cX/B})=c_1(E)+2\xi - c_1(I).$$
Pulling back via $\sigma$, we obtain
$$h_{\omega_{\pi}^{-1}}(\sigma)=\deg(E)+2\deg(\sigma^*\xi)-\deg(I)=
-\frac{1}{4}\Delta+\frac{1}{2}\deg(E)+2\deg(\sigma^*\xi).$$
On the other hand, we can express
$$F=\sigma^*\cO_{\bP(E)}(-1),$$
as the latter is the tautological subbundle for $\bP(E)$.  
Thus Equation~\ref{eqn:stability} translates into
$$
\deg(E)+\frac{1}{2}\Delta \ge -4\deg \sigma^*\xi=-2h_{\omega_{\pi}^{-1}}(\sigma)-\frac{1}{2}\Delta+\deg(E)
$$
which simplifies to
$$h_{\omega_{\pi}^{-1}}(\sigma) \ge -\frac{\Delta}{2}.$$

We summarize this computation:
\begin{prop} \label{prop:makestable}
Let $\pi:\cX \ra B$ be a quadric surface fibration with square-free discriminant 
of degree $\Delta$.
Assume that
\begin{itemize}
\item{the discriminant double cover $C\ra B$ is non-trivial;}
\item{all sections $\sigma:B\ra \cX$ satisfy the inequality
$$h_{\omega_{\pi}^{-1}}(\sigma) \ge -\frac{\Delta}{2}.$$}
\end{itemize}
Then the Fano variety of lines $\cF \ra C$ is the projectivization of a semistable
vector bundle.
\end{prop}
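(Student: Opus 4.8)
The claim is essentially a translation exercise: the two hypotheses are precisely engineered to match Bhosle's parabolic semistability criterion for $(\pi_*H)^\vee$, which in turn is equivalent to stability (here, semistability) of $(W^\vee, i)$ as a bundle with involution, and hence to semistability of $V$. So my strategy is to run the chain of equivalences already assembled in this section in reverse, verifying at each link that the hypotheses deliver exactly what is needed. I would not attempt to reprove Bhosle's equivalence; I would quote it as done above.

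\textbf{Key steps.}
First I would normalize $(\pi_*H)^\vee = E$ as in the preceding discussion, so that the discriminant and $\deg(I)$ take the stated forms, and recall that Bhosle's parabolic semistability of $(\pi_*H)^\vee$ means that for every isotropic subbundle $F \subset (\pi_*H)^\vee$ the parabolic slope inequality
$$\frac{\deg(\pi_*H)^\vee + \tfrac{1}{2}\Delta}{4} \ge \frac{\deg(F) + \tfrac{1}{2}\#\{x_j \in \bP(F)\}}{r}$$
holds, where $r = \rank(F)$. Second, using the first hypothesis that the double cover $C \ra B$ is non-trivial, I would invoke the observation already made above that $\cF \ra B$ has no section, so there are no rank-two isotropic subbundles; every nonzero isotropic $F$ therefore has rank one. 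Third, since $\pi$ has square-free discriminant, sections avoid the singular points of the fibers, so the parabolic weight term $\#\{x_j \in \bP(F)\}$ vanishes and the criterion collapses to Equation~\ref{eqn:stability}, namely $\deg((\pi_*H)^\vee) + \tfrac{1}{2}\Delta \ge 4\deg(F)$. Fourth, I would reproduce the intersection-theoretic computation already carried out above: writing $F = \sigma^*\cO_{\bP(E)}(-1)$ and using $h_{\omega_\pi^{-1}}(\sigma) = \deg(E) + 2\deg(\sigma^*\xi) - \deg(I)$, Equation~\ref{eqn:stability} is seen to be exactly equivalent to $h_{\omega_\pi^{-1}}(\sigma) \ge -\tfrac{\Delta}{2}$. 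Thus the second hypothesis, asserting this height bound for all sections, supplies precisely the parabolic inequality for all rank-one isotropic subbundles, which by the first three steps is all that must be checked.

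\textbf{Conclusion and the main obstacle.}
Having verified parabolic semistability of $(\pi_*H)^\vee$, I would then cite Bhosle's result, together with the identification $W^\vee \simeq V \otimes \iota^*V$ and the stated equivalence ``$V$ is (semi)stable provided $(W^\vee, i)$ is (semi)stable as a bundle with involution,'' to conclude that $V$ is semistable, whence $\cF = \bP(V)$ is the projectivization of a semistable bundle, as desired. The genuinely substantive point, and the one place I would be most careful, is the reduction in the second and third steps: I must argue that it suffices to test the parabolic inequality against rank-one subbundles arising geometrically as sections $\sigma$, rather than against arbitrary isotropic subbundles, and that the hypothesized height bound over \emph{all} sections exhausts all such $F$. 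The non-triviality of the cover eliminates the rank-two case cleanly, and the square-free hypothesis kills the parabolic weights, but one should confirm that every rank-one isotropic subbundle is saturated to a genuine section $\sigma: B \ra \cX$ with $F = \sigma^*\cO_{\bP(E)}(-1)$, so that no isotropic subbundle escapes the height hypothesis. The remaining degree computation is routine given the formulas of Section~\ref{sect:QFDH}.
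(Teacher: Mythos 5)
Your proposal is correct and coincides with the paper's own argument: the proposition is literally stated there as a summary ("We summarize this computation") of exactly the chain you describe — Bhosle's parabolic semistability criterion for $(\pi_*H)^{\vee}$, elimination of rank-two isotropic subbundles via non-triviality of $C\ra B$, vanishing of the parabolic weights by square-freeness, and the intersection-theoretic translation of Equation~\ref{eqn:stability} into the height bound $h_{\omega_{\pi}^{-1}}(\sigma)\ge -\Delta/2$. The point you flag for care is also handled the same way in the paper, where saturated rank-one isotropic subbundles $F\subset (\pi_*H)^{\vee}$ are identified with sections via $\bP(F)=\sigma(B)$, $F=\sigma^*\cO_{\bP(E)}(-1)$.
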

Note that the first condition holds provided $\Pic(X)\simeq \bZ$, i.e.,
the monodromy (or Galois) action exchanges the two rulings of the geometric generic
fiber of $\pi$.  
\begin{rema} \label{rema:bound}
The inequality can be verified effectively.  Since $\omega_{\pi}^{-1}$ is ample
relative to $\pi$, there exists a line bundle $A$
on $B$ such that $\omega_{\pi}^{-1}\otimes A$ is ample on $\cX$.  The sections
violating this condition have degree less than $\deg(A)-\frac{\Delta}{2}$,
and thus are bounded.
\end{rema}

\section{Arithmetic applications}
\label{sect:AA}
From now on, we assume $k$ is a finite field of odd characteristic
and $B\simeq \bP^1$.  We retain the notation of Sections~\ref{sect:reduction} and
\ref{sect:casebycase}, so
$\cX \ra \bP^1$ is a quadric surface fibration with square-free discriminant of 
degree $\Delta=2\sg+2$.
Let $C$ be the discriminant curve and $\cF\ra C$ the Fano variety of lines.
The geometric analysis of Section~\ref{sect:reduction} applies, giving
a bijection between
sections $\sigma:\bP^1 \ra \cX$ and 
sections $\tau:C \ra \cF$.  

\subsection*{Effective existence results}
\begin{prop}
There exists a section $\sigma:\bP^1 \ra \cX$ defined over $k$ with
\begin{equation} \label{eqn:htbd}
h_{\omega_{\pi}^{-1}}(\sigma) \le \begin{cases} \frac{\Delta}{2}-2 & \text{ if } \epsilon(\pi)\equiv 0 \pmod{2} \\
					   \frac{\Delta}{2}-3 & \text{ if } \epsilon(\pi)\equiv 1 \pmod{2}.
			\end{cases}
\end{equation}
\end{prop}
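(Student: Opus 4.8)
The plan is to translate the existence question for a section $\sigma:\bP^1 \ra \cX$ into an existence question for a section $\tau:C \ra \cF \simeq \bP(V)$ of the Fano variety of lines, using the bijection established in Section~\ref{sect:reduction}. By Lemma~\ref{lemm:projbund}, such sections $\tau$ with $\tau^*\cO_{\bP(V)}(1)\simeq L$ correspond to elements of $\bP(\Gamma(C, V\otimes L))$ for which the induced map $V^{\vee}\ra L$ is surjective; in particular, a nonzero global section of $V\otimes L$ suffices to produce a section $\tau$ (and hence $\sigma$), at least once $d=\deg(L)$ is large enough that the boundary locus of non-surjective maps does not exhaust $\Gamma(V\otimes L)$. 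The strategy, then, is to find a line bundle $L$ of the smallest possible degree for which $\Gamma(V\otimes L)\neq 0$, and then translate the resulting height via Equation~(\ref{eqn:hV}).

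First I would normalize $V$ as in Section~\ref{sect:casebycase}, so that $\deg(V)=2\sg-1$ or $2\sg$ according to the parity $\epsilon(\pi)$, matching the convention used in Proposition~\ref{prop:new}. Since $C$ is a curve over a finite field $k$, the key point is that $\Gamma(C, V\otimes L)\neq 0$ as soon as the Euler characteristic $\chi(V\otimes L)$ is positive, provided we can realize such an $L$ over $k$ itself. By Riemann--Roch, $\chi(V\otimes L)=\deg(V)+2\deg(L)+2(1-\sg)$, so I would choose $d=\deg(L)$ minimal subject to $\chi(V\otimes L)\ge 1$; with the normalized degrees of $V$, this forces $d$ to a specific small value. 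Then plugging $h=2d+\deg(V)-\tfrac{\Delta}{2}$ from Equation~(\ref{eqn:hV}) and using $\Delta=2\sg+2$ should produce exactly the two claimed bounds $\tfrac{\Delta}{2}-2$ (even case) and $\tfrac{\Delta}{2}-3$ (odd case).

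The main obstacle is arithmetic rather than numerical: over a finite field one must guarantee not merely that the \emph{generic} line bundle $L$ of the chosen degree admits a section, but that some \emph{$k$-rational} $L$ does, and that the associated section $\tau$ is itself defined over $k$ and genuinely gives a section (surjectivity of $V^{\vee}\ra L$, not merely a nonzero element of $V\otimes L$). I expect to handle the first issue by a counting or Lang's-theorem argument: the Picard variety $\Pic^d(C)$ has a $k$-point, and one shows that the jumping locus where $h^0(V\otimes L)$ exceeds its generic value, or where sections fail to be saturated, is a proper subvariety, so for large enough $k$ (or after the effective height bound already forces positivity) a suitable $k$-rational $L$ exists. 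The passage from a global section to an honest embedded section $\tau$ is the step most likely to require care, since a non-surjective $V^{\vee}\ra L$ yields a reducible curve (a section plus fiber components) rather than a clean section; I would argue that replacing $L$ by $L\otimes I$ for the relevant ideal sheaf $I$, or equivalently lowering $\deg(L)$, still keeps $\chi$ positive within the stated bound, so a genuine section is recovered without increasing the height beyond~(\ref{eqn:htbd}).
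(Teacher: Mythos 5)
Your proposal is correct and follows essentially the same route as the paper's proof: reduce to sections of $\cF=\bP(V)\ra C$, force $\Gamma(V\otimes L)\neq 0$ by positivity of the Euler characteristic (which, combined with the parity constraint $\deg(V)\equiv\epsilon(\pi)\pmod{2}$, yields exactly the two claimed bounds), and use Lang's theorem to produce a $k$-rational point of $\Pic^d(C)$. Your fix for non-surjective maps $V^{\vee}\ra L$ --- passing to the image $L\otimes I$, which only lowers the height --- is precisely the paper's observation that the unique horizontal component of a ``broken section'' in $\overline{\Sect(\cF/C)}$ satisfies the same inequality.
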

\begin{proof}
Recall Equations~\ref{eqn:parity} and \ref{eqn:hV}
$$\deg(V)\equiv \epsilon(\pi)\pmod{2}, \quad
h_{\omega_{\pi}^{-1}}(\sigma)=\deg(V\otimes L)-\Delta/2.$$
The morphism $\lambda:\Sect(\cF/C)\ra \Pic(C)$ is dominant over $\Pic^d(C)$
provided $\Gamma(V\otimes L)\neq 0$ for generic $L$ of degree $d$.  
This is guaranteed to be the case if $\chi(V\otimes L)>0$;  our hypothesis implies
$\deg(V\otimes L) \ge 2\sg-1$, which yields the necessary positivity.  

The generic fiber of $\lambda$ consists of a non-empty open subspace of the projective space
$\bP(\Gamma(V\otimes L))$.
Let $\overline{\Sect(\cF/C)}$ denote the closure 
of $\Sect(\cF/C)$ in the Hilbert scheme parametrizing divisors in $\cF$;
$\lambda$ extends to $\overline{\Sect(\cF/C)}$.
Its fibers are projective spaces $\bP(\Gamma(V\otimes L))$ parametrizing linear
series on $\cF=\bP(V)$, whose members are `broken sections',
consisting of one section of $\cF \ra C$ together with
a configuration of fibers.  

Since $\Pic^d(C)$ is a principal homogeneous space over an abelian variety, 
Lang's Theorem implies $\Pic^d(C)(k)\neq \emptyset$.  Consequently
$$\overline{\Sect(\cF/C)}(k)\neq \emptyset$$
corresponding to a broken section of height bounded by (\ref{eqn:htbd});  
the (unique) horizontal component satisfies the same inequality.
\end{proof}

\subsection*{Effective weak approximation}
\begin{theo}
Let $\pi:\cX \ra \bP^1$ be a quadric surface fibration with
square-free discriminant of degree $\Delta$,
defined over a finite field $k$.  Assume that
\begin{itemize}
\item{the discriminant double cover $C \ra \bP^1$ is non-trivial
over $\bar{k}$;}
\item{all sections $\sigma:\bP^1 \ra \cX$ over $\bar{k}$ satisfy
$$h_{\omega^{-1}_{\pi}}(\sigma) \ge -\Delta/2.$$}
\end{itemize}
Fix a positive integer $N$, distinct geometric points
$b_1,\ldots,b_N \in \bP^1 \setminus \md$
and $x_j \in \cX_{b_j}=\pi^{-1}(b_j)$ for $j=1,\ldots,N$, such
that $\{x_1,\ldots,x_N\}$ is defined over $k$.   Then 
there exists a section $\sigma:\bP^1\ra \cX$ satisfying
\begin{itemize}
\item{$\sigma(b_j)=x_j$ for $j=1,\ldots,N$;}
\item{$h_{\omega_{\pi}^{-1}}(\sigma) \le \frac{3}{2}\Delta+2N$.}
\end{itemize}
\end{theo}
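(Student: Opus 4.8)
The plan is to run the statement through the reduction to the discriminant of Section~\ref{sect:reduction}, turning the interpolation problem for $\sigma:\bP^1\ra\cX$ into a \emph{linear} interpolation problem for a section $\tau:C\ra\cF\simeq\bP(V)$. Since each $b_j$ lies off $\md$, the fibre $\cX_{b_j}$ is a smooth quadric surface and $x_j$ determines one line in each ruling; these are two points of $\cF$ lying over the two preimages $g^{-1}(b_j)=\{c_j,\iota(c_j)\}$. Thus $\sigma(b_j)=x_j$ is equivalent to prescribing the value of $\tau$ at the $2N$ distinct points $c_1,\iota(c_1),\dots,c_N,\iota(c_N)\in C$, and the hypothesis that $\{x_1,\dots,x_N\}$ be defined over $k$ says exactly that this configuration of points-with-directions in $\bP(V)$ is Galois-stable. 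By Lemma~\ref{lemm:projbund} a section $\tau$ with $\tau^*\cO_{\bP(V)}(1)=L$ is a global section $s\in\Gamma(V\otimes L)$, where passing through the prescribed point over $c$ means $s(c)$ spans a prescribed line $\ell_c\subset(V\otimes L)_c$; in particular I need $s(c)\ne 0$ there.

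First I would invoke the two hypotheses: they are exactly those of Proposition~\ref{prop:makestable}, so $V$, and hence every twist $V\otimes L$, is semistable. Next I fix the degree. Writing $D=\deg(V\otimes L)$, the relation $h_{\omega_\pi^{-1}}(\sigma)=D-\Delta/2$ recorded in the proof of the effective existence proposition (with $\Delta=2\sg+2$) shows that the target $h\le\tfrac32\Delta+2N$ is the same as $D\le 2\Delta+2N$. I would take $D$ near this ceiling: for $D$ of order $2\Delta+2N$ the bundle $V\otimes L$ becomes very positive (globally generated with vanishing $H^1$ by Corollary~\ref{coro:gg}, using semistability), and, more importantly, the relevant twist-down of $V\otimes L$ by the $2N$ interpolation points also acquires vanishing $H^1$. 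That vanishing, via Serre duality and the slope estimate of Proposition~\ref{prop:bounds} for semistable bundles, is what forces the pertinent evaluation map out of $\Gamma(V\otimes L)$ at the $2N$ points to be \emph{surjective onto the prescribed data}, so that the interpolation conditions are consistent and can be met exactly rather than merely cutting out a subspace of uncontrolled behaviour.

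The rationality over the fixed finite field is then handled as in the effective existence proposition above. By Lang's theorem the relevant component of $\Pic(C)$ has a $k$-point, so I may choose $L$ of the prescribed degree defined over $k$. Because $V$, $L$ and the Galois-stable configuration $\{(c,\ell_c)\}$ are all defined over $k$, the prescribed directions assemble into a $k$-rational element $v$ of $\bigoplus_c\ell_c$ with every component nonzero; surjectivity of the evaluation map then exhibits the fibre over $v$ as a nonempty $k$-affine space (a torsor under the $k$-vector space $\Gamma$ of the twist-down, whence $H^1(k,\bG_a^{\,\oplus})=0$ supplies a point), giving $s\in\Gamma(V\otimes L)$ over $k$ with $s(c)\in\ell_c\setminus\{0\}$ at all $2N$ points. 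This $s$ may vanish away from the $c$, so $\tau$ could be a broken section; but, exactly as in the existence proposition, its unique horizontal component is a genuine section, still passes through every prescribed point (since $s$ is nonzero there, the saturation agrees with $s$), and has height at most $D-\Delta/2\le\tfrac32\Delta+2N$. Transporting back through Section~\ref{sect:reduction} produces the desired $\sigma$.

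The hard part will be the simultaneous nonvanishing at all $2N$ points over the fixed field. Over $\bar k$ one may simply avoid the $2N$ ``vanishing'' hyperplanes in the linear system; but over a small finite field a union of $2N$ hyperplanes (indeed a pencil of $q+1$ of them) can exhaust $\bP^m(k)$, so no genericity argument is available and one is forced to \emph{prescribe} the nonzero values, a statement about surjectivity of the evaluation map rather than about dimensions. Controlling that surjectivity is delicate because the twist-down sheaf is only an elementary modification of $V\otimes L$: its semistability is not automatic, and one has only the bound on its destabilizing degree coming from the inclusion into the semistable $V\otimes L$. Squeezing the required vanishing into $D\le 2\Delta+2N$ is where the whole slack between the naive height $\tfrac12\Delta+2N$ and the stated $\tfrac32\Delta+2N$ is spent, and pinning down that estimate uniformly in $q$ is the step I expect to be the main obstacle.
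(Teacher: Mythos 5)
Your overall route is the paper's: reduce via Section~\ref{sect:reduction} to an interpolation problem for a section of $\bP(V)\ra C$ at the $2N$ points $c'_j,c''_j$, obtain semistability of $V$ from Proposition~\ref{prop:makestable}, choose the degree and a $k$-rational twist $L$ using Lang's theorem, and solve a linear problem; your torsor argument for $k$-rationality and your treatment of broken sections (the horizontal component still passes through the prescribed points because the section is nonzero there) are correct, and indeed more explicit than the paper about the finite-field subtleties. The genuine gap is the one you concede in your final paragraph: you never prove the vanishing that makes the evaluation map surjective onto the prescribed data within the budget $D=\deg(V\otimes L)\le 2\Delta+2N$, and that estimate is the entire quantitative content of the theorem. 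Concretely, in your formulation the kernel of the evaluation $\Gamma(V')\ra\bigoplus_j \bigl(R'_j\oplus R''_j\bigr)$, where $V'\subset V\otimes L$ is the modification along the $2N$ conditions, is $V\otimes L(-\textstyle\sum_j(c'_j+c''_j))$; twisting a rank-two bundle down by this degree-$2N$ divisor drops its degree by $4N$, so the only uniform-in-$q$ tool you have --- semistability plus Serre duality, giving $H^1=0$ once the slope exceeds $2\sg-2$ --- applies only when $D-4N\ge 4\sg-3$. Since $2\Delta+2N=4\sg+4+2N$, this overshoots your budget as soon as $N\ge 4$; and you cannot fall back on Raynaud's generic vanishing from Proposition~\ref{prop:bounds}, because $L$ must be $k$-rational rather than generic. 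So what your argument actually establishes is the theorem with roughly $\frac{3}{2}\Delta+4N$ in place of $\frac{3}{2}\Delta+2N$.

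By contrast, the paper spends one unit of degree per condition, not two: it regards $\tau(c'_j)\in\bP(R'_j)$, $\tau(c''_j)\in\bP(R''_j)$ as $2N$ linear conditions on $\Gamma(C,V)$ and invokes Corollary~\ref{coro:gg} (global generation of a semistable rank-two bundle once its degree is at least $4\sg-1$) to produce, already for $\deg(V)\ge 4\sg-1+(2N-1)$, a section $t$ with $0\neq t(c'_j)\in R'_j$ and $0\neq t(c''_j)\in R''_j$; with $h=\deg(V)-(\sg+1)$ this yields a section of height $3\sg-3+2N\le\frac{3}{2}\Delta+2N$. Your instinct that this step is delicate is sound: the modification $V'$ need not be semistable, and over a small finite field one cannot conclude by avoiding the $2N$ vanishing hyperplanes in $\bP(\Gamma(V'))$, so Corollary~\ref{coro:gg} does not apply to $V'$ off the shelf --- making that one-line citation rigorous is precisely the work your proposal defers. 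But flagging the obstacle is not overcoming it: as written, your proof gives only the weaker bound, not the stated one.
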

\begin{proof}
Let $g:C\ra \bP^1$ denote the discriminant cover and write
$g^{-1}(b_j)=\{c'_j,c''_j\}$;  let $R_j' \subset V_{c'_j}$ and
$R''_j \subset V_{c''_j}$ denote the one-dimensional subspaces
corresponding to the lines of $\cX_{b_j}$ containing $x_j$.  We seek a section
$\tau:C\ra \cF$ such that $\tau(c'_j)\in \bP(R'_j)$ and 
$\tau(c''_j) \in \bP(R''_j)$.  This imposes $2N$ independent
linear conditions
on the sections of $V$, which together are defined over $k$.

Proposition~\ref{prop:makestable} implies $\cF=\bP(V)$ for $V$
semistable.  Corollary~\ref{coro:gg} gives the existence of 
$t\in \Gamma(C,V)$ satisfying
$$0 \neq t(c'_j) \in R'_j, \quad 0 \neq t(c''_j) \in R''_j$$
provided 
$$\deg(V) \ge 4\sg-1+(2N-1).$$  
Since 
$$h_{\omega_{\pi}^{-1}}(\sigma)=\deg(V)-\Delta/2=\deg(V)-(\sg+1)$$
we obtain
$$h_{\omega_{\pi}^{-1}}(\sigma) = 3\sg-3+2N=\frac{3}{2}\Delta+2N.$$
\end{proof}
\begin{rema}
Our argument yields two variants:
\begin{itemize}
\item{we can approximate any collection of jet data over
places of good reduction, defined over $k$,
with lengths summing to $N$;}
\item{if
the ground field $k$ is algebraically closed we can improve the bound
to 
$$h_{\omega_{\pi}^{-1}}(\sigma) \le \Delta+2N-2$$
by tensoring $V$ with a generic $L \in \Pic(C)$.}
\end{itemize}
\end{rema}

\bibliographystyle{amsalpha}
\bibliography{quadricbundle}

\providecommand{\bysame}{\leavevmode\hbox to3em{\hrulefill}\thinspace}
\providecommand{\MR}{\relax\ifhmode\unskip\space\fi MR }
\providecommand{\MRhref}[2]{%
  \href{http://www.ams.org/mathscinet-getitem?mr=#1}{#2}
}
\providecommand{\href}[2]{#2}
\begin{thebibliography}{HVAV11}

\bibitem[ABB11]{ABB}
A.~Auel, B.~Bernardara, and M.~Bolognesi, \emph{Fibrations in complete
  intersections of quadrics, {C}lifford algebras, derived categories, and
  rationality problems}, 2011, arXiv:1109.6938.

\bibitem[Aue11]{Auel}
A.~Auel, \emph{Clifford invariants of line bundle-valued quadratic forms},
  2011, Max Planck Institute for Mathematics preprint.

\bibitem[BD84]{BhosleMA}
U.~N. Bhosle-Desale, \emph{Degenerate symplectic and orthogonal bundles on
  {${\bf P}^{1}$}}, Math. Ann. \textbf{267} (1984), no.~3, 347--364. \MR{738257
  (85j:14025)}

\bibitem[Bho84]{Bhosle84}
U.~N. Bhosle, \emph{Moduli of orthogonal and spin bundles over hyperelliptic
  curves}, Compositio Math. \textbf{51} (1984), no.~1, 15--40. \MR{734782
  (85j:14024)}

\bibitem[Bho98]{Bhosle98}
\bysame, \emph{Vector bundles of rank {$2$}, degree {$0$} on a nodal
  hyperelliptic curve}, Algebraic geometry ({C}atania, 1993/{B}arcelona, 1994),
  Lecture Notes in Pure and Appl. Math., vol. 200, Dekker, New York, 1998,
  pp.~271--281. \MR{1651100 (99k:14054)}

\bibitem[Bho02]{Bhosle2002}
\bysame, \emph{The compactified {J}acobian of a reducible hyperelliptic curve},
  J. London Math. Soc. (2) \textbf{65} (2002), no.~1, 55--67. \MR{1875135
  (2002j:14036)}

\bibitem[Bho10]{Bhosle2010}
\bysame, \emph{On the moduli of orthogonal bundles on a nodal hyperelliptic
  curve}, Vector bundles and complex geometry, Contemp. Math., vol. 522, Amer.
  Math. Soc., Providence, RI, 2010, pp.~43--52. \MR{2681731}

\bibitem[BK94]{BK}
W.~Bichsel and M.-A. Knus, \emph{Quadratic forms with values in line bundles},
  Recent advances in real algebraic geometry and quadratic forms ({B}erkeley,
  {CA}, 1990/1991; {S}an {F}rancisco, {CA}, 1991), Contemp. Math., vol. 155,
  Amer. Math. Soc., Providence, RI, 1994, pp.~293--306. \MR{1260714
  (95c:11053)}

\bibitem[Bou03]{bourqui}
D.~Bourqui, \emph{Fonction z\^eta des hauteurs des vari\'et\'es toriques
  d\'eploy\'ees dans le cas fonctionnel}, J. Reine Angew. Math. \textbf{562}
  (2003), 171--199. \MR{2011335 (2004g:11051)}

\bibitem[Cap94]{Caporaso}
L.~Caporaso, \emph{A compactification of the universal {P}icard variety over
  the moduli space of stable curves}, J. Amer. Math. Soc. \textbf{7} (1994),
  no.~3, 589--660. \MR{1254134 (95d:14014)}

\bibitem[Cas04]{castravet}
A.-M. Castravet, \emph{Rational families of vector bundles on curves},
  Internat. J. Math. \textbf{15} (2004), no.~1, 13--45. \MR{2039210
  (2005i:14038)}

\bibitem[CTK11]{ct-kahn}
J.-L. Colliot-Th{\'e}l{\`e}ne and B.~Kahn, \emph{Cycles de codimension 2 et
  $h^3$ non ramifi\'e pour les vari\'et\'es sur les corps finis}, 2011.

\bibitem[CTSD10]{CTSD}
J.-L. Colliot-Th{\'e}l{\`e}ne and P.~Swinnerton-Dyer, \emph{Zero-cycles and
  rational points on some surfaces over a global function field}, 2010,
  arXiv:1004.2797.

\bibitem[DR77]{DesRam}
U.~V. Desale and S.~Ramanan, \emph{Classification of vector bundles of rank
  {$2$} on hyperelliptic curves}, Invent. Math. \textbf{38} (1976/77), no.~2,
  161--185. \MR{0429897 (55 \#2906)}

\bibitem[GGK10]{GGK}
M.~Green, Ph. Griffiths, and M.~Kerr, \emph{N\'eron models and limits of
  {A}bel-{J}acobi mappings}, Compos. Math. \textbf{146} (2010), no.~2,
  288--366. \MR{2601630 (2011c:14016)}

\bibitem[Har68]{Harder}
G.~Harder, \emph{Eine {B}emerkung zum schwachen {A}pproximationssatz}, Arch.
  Math. (Basel) \textbf{19} (1968), 465--471. \MR{0241427 (39 \#2767)}

\bibitem[Har77]{Hart}
R.~Hartshorne, \emph{Algebraic geometry}, Springer-Verlag, New York, 1977,
  Graduate Texts in Mathematics, No. 52. \MR{0463157 (57 \#3116)}

\bibitem[HM82]{HM}
J.~Harris and D.~Mumford, \emph{On the {K}odaira dimension of the moduli space
  of curves}, Invent. Math. \textbf{67} (1982), no.~1, 23--88, With an appendix
  by William Fulton. \MR{664324 (83i:14018)}

\bibitem[HRS02]{HRS2}
J.~Harris, M.~Roth, and J.~Starr, \emph{Abel-{J}acobi maps associated to smooth
  cubic threefolds}, 2002, arXiv:0202080v1.

\bibitem[HRS05]{HRS}
\bysame, \emph{Curves of small degree on cubic threefolds}, Rocky Mountain J.
  Math. \textbf{35} (2005), no.~3, 761--817. \MR{2150309 (2007a:14011)}

\bibitem[HT84]{HT}
J.~Harris and L.~W. Tu, \emph{On symmetric and skew-symmetric determinantal
  varieties}, Topology \textbf{23} (1984), no.~1, 71--84. \MR{721453
  (85c:14032)}

\bibitem[HVAV11]{HVAV}
B.~Hassett, A.~V\'arilly-Alvarado, and P.~Varilly, \emph{Transcendental
  obstructions to weak approximation on general {K}3 surfaces}, Adv. Math.
  \textbf{228} (2011), no.~3, 1377--1404.

\bibitem[KNU10]{KNU}
K.~Kato, Ch. Nakayama, and S.~Usui, \emph{Log intermediate {J}acobians}, Proc.
  Japan Acad. Ser. A Math. Sci. \textbf{86} (2010), no.~4, 73--78. \MR{2657330
  (2011f:14013)}

\bibitem[KPS86]{KPS}
M.-A. Knus, R.~Parimala, and R.~Sridharan, \emph{On rank {$4$} quadratic spaces
  with given {A}rf and {W}itt invariants}, Math. Ann. \textbf{274} (1986),
  no.~2, 181--198. \MR{838464 (88a:11039)}

\bibitem[LPS97]{LPS}
A.~Lanteri, M.~Palleschi, and A.~J. Sommese, \emph{Del {P}ezzo surfaces as
  hyperplane sections}, J. Math. Soc. Japan \textbf{49} (1997), no.~3,
  501--529. \MR{1452700 (98d:14053)}

\bibitem[LY02]{lai}
K.~F. Lai and K.~M. Yeung, \emph{Rational points in flag varieties over
  function fields}, J. Number Theory \textbf{95} (2002), no.~2, 142--149.
  \MR{1924094 (2003i:11089)}

\bibitem[Mar82]{Maruyama}
M.~Maruyama, \emph{Elementary transformations in the theory of algebraic vector
  bundles}, Algebraic geometry ({L}a {R}\'abida, 1981), Lecture Notes in Math.,
  vol. 961, Springer, Berlin, 1982, pp.~241--266. \MR{708337 (85b:14020)}

\bibitem[Nag70]{Nagata}
M.~Nagata, \emph{On self-intersection number of a section on a ruled surface},
  Nagoya Math. J. \textbf{37} (1970), 191--196. \MR{0258829 (41 \#3475)}

\bibitem[Nak77]{Nakamura}
I.~Nakamura, \emph{Relative compactification of the {N}\'eron model and its
  application}, Complex analysis and algebraic geometry, Iwanami Shoten, Tokyo,
  1977, pp.~207--225. \MR{0457435 (56 \#15640)}

\bibitem[NR78]{NS}
M.~S. Narasimhan and S.~Ramanan, \emph{Geometry of {H}ecke cycles. {I}}, C.
  {P}. {R}amanujam---a tribute, Tata Inst. Fund. Res. Studies in Math., vol.~8,
  Springer, Berlin, 1978, pp.~291--345. \MR{541029 (81b:14003)}

\bibitem[Pan96]{Pandharipande}
R.~Pandharipande, \emph{A compactification over {$\overline {M}_g$} of the
  universal moduli space of slope-semistable vector bundles}, J. Amer. Math.
  Soc. \textbf{9} (1996), no.~2, 425--471. \MR{1308406 (96f:14014)}

\bibitem[Pey05]{peyre}
E.~Peyre, \emph{Obstructions au principe de {H}asse et \`a l'approximation
  faible}, Ast\'erisque (2005), no.~299, Exp. No. 931, viii, 165--193,
  S{\'e}minaire Bourbaki. Vol. 2003/2004. \MR{2167206 (2007b:14041)}

\bibitem[Ray82]{Raynaud}
M.~Raynaud, \emph{Sections des fibr\'es vectoriels sur une courbe}, Bull. Soc.
  Math. France \textbf{110} (1982), no.~1, 103--125. \MR{662131 (84a:14009)}

\bibitem[Sha78]{Shatz}
S.~S. Shatz, \emph{On subbundles of vector bundles over {${\bf P}^{1}$}}, J.
  Pure Appl. Algebra \textbf{10} (1977/78), no.~3, 315--322. \MR{0469920 (57
  \#9700)}

\end{thebibliography}

\end{document}